\newtheorem{proposition}{Proposition}[section]
\newtheorem{lemma}[proposition]{Lemma}
\newtheorem{definition}[proposition]{Definition}
\newtheorem{theorem}[proposition]{Theorem}
\newtheorem{corollary}[proposition]{Corollary}
\newtheorem{remark}[proposition]{Remark}
\newtheorem{lemma-definition}[proposition]{Lemma-Definition}
\newcounter{tmp}
\def\coh{\operatorname{coh}}
\def\lto{\longrightarrow}
\def\A{{\mathcal A}}
\def\D{{\mathcal D}}
\def\F{{\mathcal F}}
\def\E{{\mathcal E}}
\def\N{{\mathcal N}}
\def\O{{\mathcal O}}
\def\M{{\mathcal M}}
\def\ZZ{{\mathbb Z}}
\def\CC{{\mathbb C}}
\def\Z{{\mathbb Z}}
\def\ZZ{{\mathbb Z}}
\def\QQ{{\mathbb Q}}
\def\CC{{\mathbb C}}
\def\PP{{\mathbb P}}
\def\Hom{\operatorname{Hom}}
\def\End{\operatorname{End}}
\def\Pic{\operatorname{Pic}}
\def\Spec{\operatorname{Spec}}
\def\id{{\operatorname{id}}}
\def\kk{{\mathbf k}}
\def\gr{\operatorname{gr}\!}
\def\tors{\operatorname{tors}\,}
\def\GrFr{\operatorname{GrFr}\!}
\def\pr{\operatorname{pr}}
\def\prd{\boxtimes}
\def\CH{\mathcal C\mathcal M}
\def\KM{\mathcal K\mathcal M}
\newcommand \uno {{\mathbbm 1}}
\newcommand \Le {{\mathbbm L}}
\title[]{Geometric Phantom Categories}
\author[]{Sergey Gorchinskiy}
\address{ Algebra Section, Steklov Mathematical Institute RAS,
Gubkin str. 8, Moscow 119991, RUSSIA}
\email{gorchins@mi.ras.ru}
\author[]{Dmitri Orlov}
\address{ Algebraic Geometry Section, Steklov Mathematical Institute RAS,
Gubkin str. 8, Moscow 119991, RUSSIA}
\email{orlov@mi.ras.ru}
\thanks{S.G. was partially supported by RFBR grants 11-01-00145, 12-01-31506, 12-01-33024, MK-4881.2011.1, NSh grant 5139.2012.1,
by AG Laboratory HSE, RF gov. grant, ag. 11.G34.31.0023.
D.O. was partially supported by  RFBR grants 10-01-93113, 11-01-00336, 11-01-00568, NSh grant 5139.2012.1,
by AG Laboratory HSE, RF gov. grant, ag. 11.G34.31.0023.}
\date{}
\dedicatory{}
\subjclass[2010]{14F05, 18E30, 14C35, 19E99}
\begin{document}

\begin{abstract}
In this paper we give a construction of phantom categories, i.e. admissible triangulated subcategories
in bounded derived categories of coherent sheaves on smooth projective varieties that
have trivial Hochschild homology and trivial Grothendieck group. We also prove that these phantom categories are phantoms in a stronger sense,
namely, they have trivial $K$\!-motives
and, hence, all their higher $K$\!-groups are trivial too.
\end{abstract}
\maketitle

\vspace*{-0.2cm}
\section*{Introduction}

The main purpose of this paper is to provide a construction of a phantom triangulated category. We are interested in admissible subcategories of the bounded derived categories of coherent sheaves on smooth projective varieties. A triangulated subcategory $\A\subset\mathbf{D}^b(\coh X),$ where $X$ is smooth and projective, is called admissible if it is full and the inclusion functor has a right and a left adjoint. Such categories have many good properties. Any such admissible subcategory is also saturated, i.e. smooth and proper (see \cite{KS, ToVa} and also \cite{LS} Th.3.24). The Grothendieck group $K_0(\A)$ resp. the Hochschild homology $\mathrm{HH}_*(\A)$ of an admissible subcategory are direct summands of the Grothendieck group $K_0(X)$ resp. the Hochschild homology $\mathrm{HH}_*(X)$ of the variety $X.$ There was an opinion among experts that these invariants ``see'' an admissible subcategory in the sense that they can not be trivial. Possibly nonexistent admissible subcategories with trivial Hochschild homology and a trivial Grothendieck group were called phantoms. An admissible subcategory with trivial Hochschild homology and with a finite Grothendieck group is called a quasiphantom.

Recently a few examples of quasiphantoms were constructed as semiorthogonal complements
to exceptional collections of maximal possible length on some surfaces of general type for which $q=p_g=0$ \cite{BGS, AO, GS}.
In all these cases the Grothendieck group
of a quasiphantom is isomorphic to the torsion part of the Picard group of a corresponding surface.  In the paper
\cite{BGS} authors treated the classical Godeaux surface, in this case $K_0(\A)=\ZZ/5\ZZ.$ For Burniat surfaces considered in the paper \cite{AO}
the Grothendieck group of quasiphantoms is $(\ZZ/2\ZZ)^6.$ In the paper \cite{GS} authors studied the Beauville surface and obtained
a quasiphantom with $K_0(\A)=(\ZZ/5\ZZ)^2.$

These results allow us to hope for the existence of a phantom as a semiorthogonal complement to an exceptional collection of maximal length on a
simply connected surface of general type with $q=p_g=0$ like a Barlow surface  (recall that a simply connected surface has a trivial torsion part of the Picard group).

On the other hand, we can try to use another approach. It is also natural to consider an admissible subcategory generated by the tensor product
of two quasiphantoms $\A$ and $\A'$ for which orders of $K_0(\A)$ and $K_0(\A')$ are coprime.  For any pair of quasiphantoms $\A\subset\mathbf{D}^b(\coh S)$ and \mbox{$\A'\subset\mathbf{D}^b(\coh S')$} we can take
a full triangulated subcategory $\A\prd\A'$ of the category
$\mathbf{D}^b(\coh(S\times S'))$ that is the minimal triangulated subcategory
closed under taking direct summands and containing
all objects of the form $\pr_1^* \E\otimes \pr_2^* \F$ with $\E\in \A$ and $\F\in \A'.$
Assuming that~$S$ and $S'$ are two different smooth projective surfaces of general type over $\CC$ with $q=p_g=0$
for which orders of the torsion parts of Picard groups are coprime, we prove that in this case the admissible subcategory
$\A\prd\A'$ is a phantom (Theorem \ref{main}). We also show that the category $\A\prd\A'$ is a phantom in a strong sense.  We introduce
a notion of universal phantom (see Definition \ref{def:uphantom}) and prove that the property for an admissible subcategory
 $\N\subset \mathbf{D}^b(\coh X)$ to be a universal phantom is equivalent to the vanishing of its $K$\!-motive
 $KM(\N)$ (see Proposition \ref{prop:KM-univ}). In fact
 we show that the admissible subcategory $\A\prd\A'$ is a universal phantom (Theorem \ref{main2}).
This immediately implies that all its $K$\!-groups $K_i(\A\prd\A')$ are trivial.
All these results can be applied to the case when $S$ is a Burniat surface and $S'$ is the classical Godeaux surface (or the Beauville surface).
This gives us first examples of geometric phantom categories. Actually, we obtain first  examples of  saturated DG categories whose
$K$\!-motives (that are also called noncommutative motives) are trivial and, moreover, these examples have a geometric nature, i.e. they are admissible subcategories in bounded derived categories of coherent sheaves on smooth projective varieties.

We believe that new examples of surfaces of general type with
exceptional collections of maximal length will be found in the near future and so new examples of quasiphantoms
will be obtained. Applying Theorems \ref{main} and \ref{main2} we will be able to get
other universal phantom categories as well.

In the last section we also show that over $\CC$ the vanishing of Hochschild homology for admissible subcategories is a consequence of the vanishing of $K_0$\!-groups with rational coefficients.

When this work was done we were informed by Ludmil Katzarkov that the approach of constructing of a phantom as  a semiorthogonal complement to an exceptional collection of maximal length on a Barlow surface is realized now by Ch.~B\"ohning, H-Ch.~Graf~von Bothmer, L.~Katzarkov, and P.~Sosna  in their incoming paper \cite{BBKS}.

We would like to thank Ivan Panin for very useful discussions and comments.

\section{Semiorthogonal decompositions and phantoms}

Let $\D$ be a $\kk$\!-linear triangulated category category, where $\kk$ is a base field.
Recall some definitions and facts concerning admissible
subcategories and semiorthogonal decompositions (see \cite{BK, BO}).
Let ${\N\subset\D}$ be a full triangulated subcategory.  The {\sf
right orthogonal} to ${\N}$ is the full subcategory
${\N}^{\perp}\subset {\D}$ consisting of all objects $M$ such that
${\Hom(N, M)}=0$ for any $N\in{\N}.$ The {\sf left orthogonal}
${}^{\perp}{\N}$ is defined analogously.  The orthogonals are also
triangulated subcategories.

\begin{definition}\label{adm}
Let $I\colon\N\hookrightarrow\D$ be an embedding of a full triangulated
subcategory $\N$ in a triangulated category $\D.$ We say that ${\N}$
is {\sf right admissible} (respectively {\sf left admissible}) if
there is a right (respectively left) adjoint functor $Q\colon\D\to \N.$ The
subcategory $\N$ will be called {\sf admissible} if it is right and left
admissible.
\end{definition}
\begin{remark}\label{semad}
{\rm For the subcategory $\N$ the property of being right admissible
is equivalent to requiring that for each $X\in{\D}$ there be an
exact triangle $N\to X\to M,$ with $N\in{\N}, M\in{\N}^{\perp}.$ }
\end{remark}

Let $\N$ be a full triangulated subcategory in a triangulated category
$\D.$ If $\N$ is right (respectively left) admissible, then the
quotient category $\D/\N$ is equivalent to $\N^{\perp}$ (respectively~${}^{\perp}\N$).  Conversely, if the quotient functor $Q\colon\D\lto\D/\N$
has a left (respectively right) adjoint, then~$\D/\N$ is equivalent to
$\N^{\perp}$ (respectively~${}^{\perp}\N$).

\begin{definition}\label{sd}
A collection of admissible subcategories $({\N}_1, \dots, {\N}_n)$ in a triangulated category~${\D}$
is said to be {\sf semiorthogonal} if the condition ${\N}_j\subset {\N}^{\perp}_i$ holds
when $j<i$ for any $1\le i\le n.$
A semiorthogonal collection is said to be {\sf full} if it
generates the category~${\D},$ i.e. the minimal triangulated subcategory of $\D$ containing
all $\N_j$ coincides with the whole $\D.$
In this case we call it by  a {\sf semiorthogonal decomposition}
of the category ${\D}$ and denote this as
$$
{\D}=\left\langle{\N}_1, \dots, {\N}_n\right\rangle.
$$
\end{definition}

The existence of a semiorthogonal decomposition on a triangulated
category $\D$ clarifies the structure of $\D.$ In the best scenario,
one can hope that $\D$ has a semiorthogonal decomposition
${\D}=\left\langle{\N}_1, \dots, {\N}_n\right\rangle$ in which each
elementary constituent $\N_p$ is as simple as possible, i.e. is
equivalent to the bounded derived category of finite-dimensional
vector spaces.


\begin{definition}\label{exc}
An object $E$ of a $\kk$\!-linear triangulated category ${\D}$ is
called {\sf exceptional} if  ${\Hom}(E, E[l])=0$ when $l\ne 0,$ and
${\Hom}(E, E)=\kk.$ An {\sf  exceptional collection} in ${\D}$ is a
sequence of exceptional objects $(E_1,\dots, E_n)$ satisfying the
semiorthogonality condition ${\Hom}(E_i, E_j[l])=0$ for all $l$ when
$i>j.$
\end{definition}

If a triangulated category $\D$ has an exceptional collection
$(E_1,\dots, E_n)$ that generates the whole of $\D$ then we say that
the collection is {\sf full}.  In this case $\D$ has a
semiorthogonal decomposition with $\N_p=\langle E_p\rangle.$ Since
$E_{p}$ is exceptional, each of these categories is equivalent to the
bounded derived category of finite dimensional vector spaces.  In
this case we write $ \D=\langle E_1,\dots, E_n \rangle.$

\begin{definition}\label{strong}
An exceptional collection $(E_1,\dots, E_n)$ is called strong if, in addition,
${\Hom}(E_i, E_j[l])=0$ for all  $i$ and $j$ when $l\ne 0.$
\end{definition}

The best known example of an exceptional collection is the sequence
of invertible sheaves
$\left(\mathcal{O}_{\mathbb{P}^n},\dots,\mathcal{O}_{\mathbb{P}^n}(n)\right)$
on the projective space $\mathbb{P}^n.$ This exceptional collection is
full and strong.

If we have a semiorthogonal decomposition ${\D}=\left\langle{\N}_1, \dots, {\N}_n\right\rangle$ then the inclusion functors induce an isomorphism on Grothendieck groups
$$
K_0(\N_1)\oplus K_0(\N_2)\oplus\cdots\oplus K_0(\N_n)\cong K_0(\D).
$$

For example, if $\D$ has a full exceptional collection then the Grothendieck group
$K_0(\D)$ is a free abelian group $\ZZ^n.$

It is more convenient to consider so called enhanced triangulated categories, i.e. triangulated categories
that are  homotopy categories  of pretriangulated differential graded (DG) categories (see \cite{BK2, Ke2}).
Any geometric category like the bounded derived category of coherent sheaves
$\mathbf{D}^b(\coh X)$
has a natural enhancement. It is even shown that in many cases these categories have a unique enhancement \cite{LO}.
An enhancement of a triangulated category  $\D$ induces an enhancement for any full triangulated subcategory
$\N\subset\D.$

Using enhancement of a triangulated category $\D$ we can define $K$\!-theory spectrum $K(\D)$ and Hochschild homology $\mathrm{HH}_{*}(\D)$ of $\D$
(see \cite{Ke2}).  They also give us additive invariants (see, for example, \cite{Ke2}  5.1, Th. 5.1 c) and Th. 5.2 a) and also \cite{Ku1} Th.7.3 for Hochschild homology), i.e for any
semiorthogonal decomposition we obtain  isomorphisms
$$
\bigoplus_{i=1}^{n} \mathrm{HH}_*(\N_i)\cong \mathrm{HH}_{*}(\D)  \quad\text{and}\quad
\bigoplus_{i=1}^{n} K_*(\N_i)\cong K_*(\D)
$$

We are interested in categories that have a geometric nature, i.e. when $\D$ and $\N_i$ are full triangulated subcategories of
the bounded derived category of coherent sheaves on a smooth projective variety $X.$ In this case, as it was mentioned above,
we have natural enhancements for these categories and we work with such enhancements.

Let $X$ and $Y$ be two smooth projective varieties. Any semiorthogonal decompositions
\begin{equation}\label{semi} \mathbf{D}^b(\coh X)= \left\langle{\N}_1, \dots, {\N}_n\right\rangle, \quad
\mathbf{D}^b(\coh Y)= \left\langle{\M}_1, \dots, {\M}_m\right\rangle
\end{equation} induce a semiorthogonal decomposition of the product $\mathbf{D}^b(\coh(X\times Y)).$
Indeed, for any pair of subcategories $\N_i$ and $\M_j$ we can define
a full triangulated subcategory $\N_i\prd\M_j$ of the category
$\mathbf{D}^b(\coh(X\times Y))$ as the minimal triangulated subcategory
of $\mathbf{D}^b(\coh(X\times Y))$ closed under taking direct summands and containing
all objects of the form $\pr_1^* N\otimes \pr_2^* M$ with $N\in \N_i$ and $M\in \M_j.$

It is easy to see that the subcategories $\N_i\prd\M_j$    and $\N_k\prd\M_l$   are semiorthogonal
in the sense that $\N_i\prd\M_j\subset (\N_k\prd\M_l)^{\perp}$ when $i<k$ or $j<l.$

On the other hand, the whole category $\mathbf{D}^b(\coh(X\times Y))$ is split generated
by objects of the form $\pr_1^* A\otimes \pr_2^*B,$ where $A\in \mathbf{D}^b(\coh X)$ and
$B\in \mathbf{D}^b(\coh Y),$ i.e. the minimal triangulated subcategory
of $\mathbf{D}^b(\coh(X\times Y))$ closed under taking direct summands and containing
all objects of the form $\pr_1^* A\otimes \pr_2^* B$ coincides with the whole category $\mathbf{D}^b(\coh(X\times Y)).$
It follows from the fact that any sheaf $\F$ on the product has a resolution $P^{\cdot}$ by sheaves of such form
and this sheaf $\F$ is a direct summand of the stupid truncation $\sigma_{\ge -p}P^{\cdot}$ for sufficient large $p.$
Therefore the subcategories  $\N_i\prd\M_j$ generates the whole category $\mathbf{D}^b(\coh(X\times Y))$
and we obtain a semiorthogonal decomposition for the product $X\times Y.$ Thus we proved the following statement.
\begin{proposition} The semiorthogonal decompositions (\ref{semi}) of
$\mathbf{D}^b(\coh X)$ and $\mathbf{D}^b(\coh Y)$ give a semiorthogonal decomposition for the product
$$
\mathbf{D}^b(\coh(X\times Y))= \left\langle  \N_i\prd\M_j \right\rangle_{1\le i\le n, 1\le j \le m}.
$$
\end{proposition}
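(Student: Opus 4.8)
The plan is to establish the two parts of the statement separately: first the semiorthogonality of the collection $\{\N_i\prd\M_j\}$ in the appropriate linear order, and second the fact that these subcategories generate the whole of $\mathbf{D}^b(\coh(X\times Y))$. Both claims are in fact already sketched in the paragraphs preceding the proposition, so the proof amounts to assembling them; the only subtlety is the choice of total order on the index pairs $(i,j)$ and the observation that $\N_i\prd\M_j$ is itself admissible.

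For the semiorthogonality, I would first order the pairs $(i,j)$ lexicographically (or by $i$ then $j$), so that $(i,j)$ precedes $(k,l)$ iff $i<k$, or $i=k$ and $j<l$; equivalently I need $\N_i\prd\M_j\subset(\N_k\prd\M_l)^\perp$ whenever $(k,l)$ precedes $(i,j)$, i.e.\ whenever $k<i$, or $k=i$ and $l<j$. To check this it suffices, by definition of $\N_k\prd\M_l$ as generated under shifts, cones and summands by the objects $\pr_1^*N\otimes\pr_2^*M$, to verify that $\Hom(\pr_1^*N'\otimes\pr_2^*M',\,\pr_1^*N\otimes\pr_2^*M[\ast])=0$ for $N\in\N_k$, $M\in\M_l$, $N'\in\N_i$, $M'\in\M_j$ in the relevant range. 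Here I would invoke the Künneth-type isomorphism $\bR\Hom_{X\times Y}(\pr_1^*N'\otimes\pr_2^*M',\pr_1^*N\otimes\pr_2^*M)\cong\bR\Hom_X(N',N)\otimes^{\mathbf L}_{\kk}\bR\Hom_Y(M',M)$, valid for perfect complexes on smooth projective varieties. If $k<i$ then $\bR\Hom_X(N',N)=0$ by semiorthogonality of $(\N_1,\dots,\N_n)$; if $k=i$ and $l<j$ then $\bR\Hom_Y(M',M)=0$ by semiorthogonality of $(\M_1,\dots,\M_m)$. Either way the tensor product vanishes, giving the required orthogonality. I should also remark that each $\N_i\prd\M_j$, being a right-admissible subcategory generated by a suitable collection, is admissible — this follows since $\D^b(\coh(X\times Y))$ is saturated and a product of admissible subcategories is admissible, or directly from the semiorthogonal decomposition once fullness is known.

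For fullness, I would argue exactly as indicated in the excerpt: the category $\mathbf{D}^b(\coh(X\times Y))$ is split-generated by the external products $\pr_1^*A\otimes\pr_2^*B$ with $A\in\mathbf{D}^b(\coh X)$, $B\in\mathbf{D}^b(\coh Y)$ (every coherent sheaf on the product admits a resolution $P^\cdot$ by finite direct sums of such external products of sheaves, and is a direct summand of a stupid truncation $\sigma_{\ge-p}P^\cdot$ for $p\gg0$). Now decompose $A$ using $\mathbf{D}^b(\coh X)=\langle\N_1,\dots,\N_n\rangle$ and $B$ using $\mathbf{D}^b(\coh Y)=\langle\M_1,\dots,\M_m\rangle$: up to shifts and cones, $A$ is built from objects $N\in\N_i$ and $B$ from objects $M\in\M_j$, and since $\pr_1^*(-)\otimes\pr_2^*(-)$ is exact in each variable, $\pr_1^*A\otimes\pr_2^*B$ lies in the triangulated subcategory generated by the $\pr_1^*N\otimes\pr_2^*M$, hence in $\langle\N_i\prd\M_j\rangle_{i,j}$. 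Taking direct summands, the whole of $\mathbf{D}^b(\coh(X\times Y))$ is generated by the $\N_i\prd\M_j$.

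Finally I would assemble these observations: the collection $(\N_i\prd\M_j)$, ordered as above, consists of admissible subcategories, is semiorthogonal, and is full, hence is a semiorthogonal decomposition
$$
\mathbf{D}^b(\coh(X\times Y))=\left\langle\N_i\prd\M_j\right\rangle_{1\le i\le n,\,1\le j\le m}.
$$
The main obstacle, if any, is purely bookkeeping: making sure the chosen total order on pairs is compatible with the ``$i<k$ or $j<l$'' orthogonality already recorded in the excerpt (it is, taking the lexicographic order), and being careful that the Künneth formula for $\bR\Hom$ is applied to perfect complexes, which is legitimate here since $X$ and $Y$ are smooth. Everything else is a direct consequence of the definitions and of the standard facts on admissible subcategories recalled at the start of the section.
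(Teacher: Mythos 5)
Your proof is correct and follows essentially the same approach as the paper: semiorthogonality via the K\"unneth formula for $\bR\Hom$ of external products, and fullness via split-generation of $\mathbf{D}^b(\coh(X\times Y))$ by external tensor products together with the decompositions of the factors. One small slip in the write-up: where you write ``I need $\N_i\prd\M_j\subset(\N_k\prd\M_l)^\perp$ whenever $(k,l)$ precedes $(i,j)$,'' the indices are transposed --- the semiorthogonality condition asks for $\N_k\prd\M_l\subset(\N_i\prd\M_j)^\perp$ when $(k,l)$ precedes $(i,j)$ --- but the concrete vanishing you actually verify, $\Hom(\pr_1^*N'\otimes\pr_2^*M',\,\pr_1^*N\otimes\pr_2^*M)=0$ with $N'\in\N_i$, $M'\in\M_j$, $N\in\N_k$, $M\in\M_l$, is precisely $\Hom(\N_i\prd\M_j,\N_k\prd\M_l)=0$, i.e.\ the correct statement, so the argument itself is sound.
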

\begin{remark} {\rm As it was shown in \cite{To}, in the `world of DG categories up to quasi-equivalences' the category
$\N_i\prd\M_j$ can be considered  as a category of functors from $\N_i$ to $\M_j$ (see also \cite{Ke2}).
This means that the category $\N_i\prd\M_j$ can be defined abstractly without using the full embedding
to a bounded derived category of coherent sheaves.
}
\end{remark}

\begin{definition} An admissible triangulated subcategory
$\A$ in $\mathbf{D}^b(\coh X),$ where $X$ is a smooth projective variety
will be called  a {\sf quasiphantom} if
$\mathrm{HH}_{*}(\A)=0$ and $K_0(\A)$ is a finite abelian group.
It is called a {\sf phantom} if, in addition, $K_0(\A)=0.$
\end{definition}

\begin{definition}\label{def:uphantom}
We say that a phantom subcategory $\A\subset\mathbf{D}^b(\coh X)$ is a {\sf universal phantom} if
$\A\prd\mathbf{D}^b(\coh Y)\subset\mathbf{D}^b(\coh(X\times Y))$ is also phantom
for any smooth projective variety $Y.$
\end{definition}

As it will be shown in section \ref{sec:Kmotives} this universal property can be checked for $Y=X,$ i.e. if $\A\prd\mathbf{D}^b(\coh X)\subset
\mathbf{D}^b(\coh(X\times X))$ is a phantom then $\A$ is a universal phantom.

Recently \cite{BGS, AO, GS} different examples of quasiphantoms were constructed as semiorthogonal complements
to exceptional collections of maximal length on some surfaces of general type with $q=p_g=0$ for which Bloch's conjecture holds by \cite{IM}.

In more detail, let $S$ be a smooth projective surface of general type over $\CC$ with $q=p_g=0$ for which Bloch's conjecture for
0-cycles holds, i.e. the Chow group $CH^2(S)\cong \ZZ.$ In this case the Grothendieck group
$K_0(S)$  is  isomorphic to $\ZZ\oplus \Pic(S)\oplus \ZZ\cong \ZZ^{r+2}\oplus \Pic(S)_{\tors},$ where
$r$ is the rank of the Picard lattice $\Pic(S)/\tors.$
Since for such a surface $\Pic(S)$ is isomorphic to $H^2(S(\CC), \ZZ)$ and using Noether's formula we have
$$
r+2=b_2+2=e=c_2=12-c_1^2,
$$
where $b_2$ is the second Betti number, $e$ is the topological Euler characteristic,  and $c_1, c_2$ are the first and the second Chern classes of $S.$

Assume that the derived category
 $\mathbf{D}^b(\coh S)$ possesses an exceptional collection $(E_1,\ldots, E_{e})$ of the maximal length $e.$
In this case we obtain a semiorthogonal
decomposition
$$
\mathbf{D}^b(\coh S)=\langle \D, \A \rangle,
$$
where $\D$ is the admissible subcategory generated by $(E_1,\ldots, E_{e})$ and  $\A$ is the left orthogonal to $\D.$
We have that $K_0(\D)\cong\ZZ^{e}$ and $K_0(\A)\cong\Pic(S)_{\tors}.$

For the classical Godeaux surface $S$ that is the
$\ZZ/5\ZZ$-quotient of the Fermat quintic in~$\PP^3$ an exceptional collection of maximal length was constructed
in \cite{BGS}. In this case $e=11.$

\begin{theorem}\cite[Th. 8.2]{BGS}\label{th-god}
Let $S$ be the classical Godeaux surface. There exists a semi-
orthogonal decomposition
$$
\mathbf{D}^b(\coh S)=\langle L_1,\ldots, L_{11}, \A \rangle,
$$
where $(L_1, \ldots, L_{11})$ is an exceptional sequence of maximal length consisting
of line bundles on $S.$
\end{theorem}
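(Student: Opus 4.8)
Since $q=p_g=0$ one has $H^i(S,\O_S)=0$ for $i>0$, so $\Ext^l(L,L)=H^l(S,\O_S)$ equals $\CC$ for $l=0$ and vanishes otherwise; thus \emph{every} line bundle on $S$ is an exceptional object, and an eleven-term sequence of line bundles $(L_1,\dots,L_{11})$ is an exceptional collection precisely when $H^l(S,L_j\otimes L_i^{-1})=0$ for all $l$ and all $i>j$. Once such a collection is produced, $\D:=\langle L_1,\dots,L_{11}\rangle$ is admissible, hence so is $\A:={}^{\perp}\D$, and $\mathbf{D}^b(\coh S)=\langle\D,\A\rangle=\langle L_1,\dots,L_{11},\A\rangle$; the length $11$ is maximal because, as recalled above, $K_0(S)\cong\ZZ^{11}\oplus\ZZ/5\ZZ$ has free rank $11$, while an exceptional collection of length $k$ splits a free summand $\ZZ^k$ off $K_0(S)$. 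So the theorem amounts to exhibiting the eleven line bundles and checking the $55$ resulting cohomology vanishings. Observe that each such vanishing forces, by Riemann--Roch on the surface, that the difference class $M=c_1(L_j)-c_1(L_i)$ satisfy $M\cdot(M-K_S)=-2$; in particular the collection must genuinely exploit the rank-nine Picard lattice of $S$ and cannot be confined to $\ZZ K_S\oplus\Pic(S)_{\tors}$.

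The plan is to transfer the whole problem to the Fermat cover. Write $S=X/G$ with $X=\{x_0^5+x_1^5+x_2^5+x_3^5=0\}\subset\PP^3$ and $G=\ZZ/5\ZZ$ acting by $x_i\mapsto\zeta^i x_i$ for a primitive fifth root of unity $\zeta$; the action is free on $X$ because the only $G$-fixed points of $\PP^3$ are the four coordinate points and none of these lies on the Fermat quintic. Then $\pi\colon X\to S$ is finite étale Galois, so $\mathbf{D}^b(\coh S)\simeq\mathbf{D}^{b}_{G}(\coh X)$ and $\pi^*K_S=K_X=\O_X(1)$. First one describes $\Pic(S)$: its torsion subgroup is $\widehat G\cong\ZZ/5\ZZ$, corresponding to line bundles $\rho_\chi$ with $\pi^*\rho_\chi\cong\O_X$, and $\pi^*$ identifies $\Pic(S)/\tors$ with the rank-nine, signature $(1,8)$ lattice $\Pic(X)^G$, spanned by the hyperplane class and $G$-invariant integral combinations of the (explicit) lines lying on $X$. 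For a line bundle $L$ on $S$, decomposing $\pi_*\pi^*L=\bigoplus_\chi L\otimes\rho_\chi$ into isotypic pieces gives $H^l(S,L\otimes\rho_\chi)=H^l(X,\pi^*L)_\chi$; and each $\pi^*L$ has the form $\O_X(n)$ twisted by line classes, whose cohomology --- as a $G$-representation --- one computes from the exact sequence $0\to\O_{\PP^3}(n-5)\to\O_{\PP^3}(n)\to\O_X(n)\to0$, which already shows $H^1(X,\O_X(n))=0$ for all $n$ (so only $H^0$ and $H^2$ ever occur), together with the analogous sequences for the lines. Concretely $H^0(X,\O_X(n))$ is the space of degree-$n$ forms modulo the Fermat relation, with its evident $G$-grading, and $H^2(X,\O_X(n))\cong H^0(X,\O_X(1-n))^\vee$.

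Next one fixes the collection. The first step is lattice-theoretic: using the description of $\Pic(S)$, find classes $D_1,\dots,D_{11}$ such that the Euler pairing $\chi(\O_S(D_i),\O_S(D_j))$ vanishes for all $i>j$, equivalently $(D_j-D_i)\cdot(D_j-D_i-K_S)=-2$ whenever $i>j$. The second --- and genuinely harder --- step is to choose among such numerical candidates, and in particular to fix the torsion twists, so that these Euler vanishings are upgraded to honest cohomology vanishings $H^\bullet(S,L_j\otimes L_i^{-1})=0$. By the reduction above this is a finite task: for each of the $55$ pairs one computes the $G$-equivariant cohomology of an explicit sheaf of the form $\O_X(n)\otimes(\text{line twists})$ and verifies that the relevant isotypic component is zero. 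In practice one searches, by computer algebra, for a tuple $(D_i)$ passing all $55$ tests at once, and this is the technical heart of \cite{BGS}: there is no conceptual shortcut for the ``boundary'' differences of small degree, where one must control the Fermat ideal in low degrees together with the $G$-action on the multiplication maps $H^0(\O_X(a))\otimes H^0(\O_X(b))\to H^0(\O_X(a+b))$ (and the dual data for $H^2$), and it is this requirement that all $55$ vanishings hold simultaneously that makes the choice of the eleven line bundles delicate rather than routine.
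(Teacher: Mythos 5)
The paper does not prove this statement; Theorem~\ref{th-god} is imported verbatim from \cite[Th.~8.2]{BGS} and used as input, so there is no internal proof to compare your sketch against. That said, your reconstruction is accurate and does follow the strategy of \cite{BGS}: reduce line bundles on the Godeaux surface $S$ to $G$-equivariant line bundles on the Fermat quintic $X$ via the free $\ZZ/5\ZZ$-action, use the isomorphism $\pi^*\colon\Pic(S)/\tors\stackrel{\sim}\to\Pic(X)^G$ (which follows because $H^2(\ZZ/5\ZZ,\CC^*)=0$, so there is no Brauer obstruction), do a numerical search in the rank-nine lattice for eleven classes with pairwise $\chi=0$, and then verify the 55 genuine cohomology vanishings by explicit $G$-equivariant computation on $X$ and its lines --- the step that in \cite{BGS} is carried out by computer algebra. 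Your framing of why the length $11$ is maximal (free rank of $K_0(S)$) and why the collection cannot live in $\ZZ K_S\oplus\Pic(S)_{\tors}$ (the equation $a(a-1)=-2$ has no integer solution) are both correct and match the standard reasoning, and the Serre-duality/cohomology facts you invoke on $X$ are right (with the small caveat, which you elide, that equivariant Serre duality introduces a character twist by $H^0(K_X)$ that must be tracked when identifying isotypic components). In short: correct as a sketch of \cite{BGS}'s argument, but be aware that in this paper the theorem is a citation, not a result it proves.
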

The subcategory $\A$ for the classical Godeaux surface is a quasiphantom and the Grothendieck group $K_0(\A)$ is isomorphic to the cyclic group $\ZZ/5\ZZ.$

For Burniat surfaces with $c_1^2=6$  exceptional collections of maximal length 6 were constructed
in \cite{AO}. In this case we have a 4-dimensional family of such surfaces.

\begin{theorem}\cite[Th. 4.12]{AO}\label{th-bur} For any Burniat surface $S$ we have a semiorthogonal
  decomposition
$$
\mathbf{D}^b(\coh S)=\langle \D, \A \rangle,
$$
where $\D$ is an admissible subcategory generated by an exceptional collection of line bundles
$(L_1, \ldots, L_6).$ The category $\D$ is the same for all
Burniat surfaces. The category $\A$ has trivial Hochschild homology
and $K_0(\A)=(\ZZ/2\ZZ)^6.$
\end{theorem}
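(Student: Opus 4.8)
The plan rests on the presentation of a Burniat surface $S$ (the case $c_1^2 = 6$) as a $G$-Galois cover $\pi\colon S \to Y$ with $G = (\ZZ/2\ZZ)^2$, where $Y$ is a \emph{fixed} smooth toric surface --- the del~Pezzo surface of degree $6$, i.e.\ $\PP^2$ blown up at three non-collinear points --- and the branch locus is a reduced divisor on $Y$ whose position is the only datum that varies in the $4$-dimensional family. First I would record the cover-theoretic bookkeeping: the eigensheaf decomposition $\pi_*\O_S = \bigoplus_{\chi}\L_{\chi}^{-1}$, where $\chi$ ranges over the four characters of $G$, $\L_0 = \O_Y$, and each $\L_{\chi}$ is an explicit effective divisor on $Y$ built from the branch components; the fact that $\pi$ is finite and flat, so $\pi^*$ and $\pi_*$ are exact and $\mathbf{D}^b(\coh S)$ is the derived category of modules over the $\O_Y$-algebra $\pi_*\O_S$; and the projection formula $\pi_*(\pi^*\F\otimes\G)\cong\F\otimes\pi_*\G$.

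Next I would exhibit the candidate generators of $\D$. Starting from a full strong exceptional collection of line bundles $(M_1,\dots,M_6)$ on the toric surface $Y$, take $L_i$ to be $\pi^*M_i$ twisted by a line bundle built from the ramification divisors of $\pi$. By adjunction and the projection formula, each $\Hom^{\bullet}_S(L_i,L_j)$ decomposes --- via the eigensheaf decomposition of $\pi_*\O_S$ --- into a direct sum of cohomology groups $H^{\bullet}(Y,N)$ of explicit line bundles $N$ on $Y$; so verifying that $(L_1,\dots,L_6)$ is an exceptional collection, i.e.\ $\Hom^{\bullet}_S(L_i,L_i) = \CC$ and $\Hom^{\bullet}_S(L_i,L_j) = 0$ for $j < i$, reduces to a large but finite list of acyclicity statements on the \emph{fixed} surface $Y$. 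One arranges the $M_i$ and the twists so that every line bundle $N$ forced to be acyclic is either anti-effective with strictly negative intersection against a suitable curve, or lies in a toric (Demazure) or Kawamata--Viehweg vanishing range. I expect this last step --- choosing the six degrees and the twists so that the whole list of acyclicities holds at once --- to be the principal obstacle, and it is the only place where the geometry of the degree-$6$ del~Pezzo and of the Burniat branch configuration is genuinely used. Since $\mathbf{D}^b(\coh S)$ is saturated (smooth and proper), the exceptional collection then automatically generates an admissible subcategory $\D := \langle L_1,\dots,L_6\rangle$, and setting $\A := {}^{\perp}\D$ produces the semiorthogonal decomposition $\mathbf{D}^b(\coh S) = \langle\D,\A\rangle$.

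It then remains to read off the invariants of $\A$ from additivity under semiorthogonal decompositions (applied to the natural DG enhancements). For Hochschild homology: over $\CC$, the Hochschild--Kostant--Rosenberg isomorphism together with $q = p_g = 0$ forces $\mathrm{HH}_*(S)$ to be concentrated in degree $0$ with $\dim\mathrm{HH}_0(S) = 2 + h^{1,1}(S) = e(S) = 12 - c_1^2 = 6$, while $\mathrm{HH}_*(\D)$ is concentrated in degree $0$ of dimension $6$ because $\D$ is generated by a length-$6$ exceptional collection; the additivity isomorphism $\mathrm{HH}_*(S) \cong \mathrm{HH}_*(\D)\oplus\mathrm{HH}_*(\A)$ then forces $\mathrm{HH}_*(\A) = 0$. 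For the Grothendieck group: Burniat surfaces satisfy Bloch's conjecture, hence $K_0(S) \cong \ZZ\oplus\Pic(S)\oplus\ZZ \cong \ZZ^{6}\oplus\Pic(S)_{\tors}$, and the torsion $\Pic(S)_{\tors} \cong H^2(S,\ZZ)_{\tors} \cong (\ZZ/2\ZZ)^6$ is read off from the bidouble cover construction; since $K_0(\D)\cong\ZZ^{6}$ is free of the full rank $6$, the splitting $K_0(S) \cong K_0(\D)\oplus K_0(\A)$ forces $K_0(\A)\cong (\ZZ/2\ZZ)^6$. Finally, the $L_i$, the $\O_Y$-algebra $\pi_*\O_S$ together with its multiplication, and all the groups $\Hom^{\bullet}_S(L_i,L_j)$ above are computed entirely on the fixed surface $Y$ with no reference to the moduli parameters, so the (DG) endomorphism algebra of $\bigoplus_i L_i$ --- and with it the category $\D$ --- is the same for every Burniat surface; one can moreover arrange the collection to be strong, exhibiting $\D$ as the derived category of modules over a single finite-dimensional algebra.
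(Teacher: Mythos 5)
This theorem is a citation of \cite[Th.~4.12]{AO}; the present paper does not reprove it, so the right comparison is with the source. Your reconstruction matches the strategy of Alexeev--Orlov in all essentials: present the Burniat surface with $c_1^2=6$ as a $(\ZZ/2\ZZ)^2$ bidouble cover $\pi\colon S\to Y$ of the fixed degree-$6$ del Pezzo surface; use the eigensheaf decomposition of $\pi_*\O_S$ and the projection formula to reduce all the required $\Hom^{\bullet}_S(L_i,L_j)$ to a finite list of acyclicity statements for explicit line bundles on $Y$; invoke admissibility of the subcategory generated by an exceptional collection in a saturated ambient category; and then obtain $\mathrm{HH}_*(\A)=0$ and $K_0(\A)\cong(\ZZ/2\ZZ)^6$ from additivity together with the HKR computation (giving $\dim\mathrm{HH}_0(S)=e(S)=6$ concentrated in degree $0$), Bloch's conjecture for these surfaces, and the known isomorphism $\Pic(S)_{\tors}\cong(\ZZ/2\ZZ)^6$. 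The observation that the independence of $\D$ from the moduli point comes from the fact that all the data live over the fixed toric $Y$ is also the idea used in \cite{AO}.

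Two small corrections. First, your closing remark that ``one can moreover arrange the collection to be strong'' is not available: the exceptional collection of \cite{AO} is not strong --- there are nonzero $\Ext^2$-groups between some $L_i$ and $L_j$ with $i<j$ --- so $\D$ is not presented as $\mathbf{D}^b$ of a finite-dimensional algebra of global dimension $0$, and the assertion that $\D$ is constant in the family has to be (and in \cite{AO} is) argued through the $\Ext$-algebra and its higher structure being computable over the fixed $Y$, not through strongness. Second, the $L_i$ in \cite{AO} are not literally of the form $\pi^*M_i\otimes(\text{twist by ramification})$ for a fixed full strong collection $(M_i)$ on $Y$; they are chosen directly in $\Pic(S)$, which is generated by $K_S$ together with the ramification divisor classes, although the mechanism you describe --- push down to $Y$ and compute cohomology there --- is exactly what drives the verification. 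Neither point affects the overall correctness of the plan, and the additivity computations for $\mathrm{HH}_*$ and $K_0$ are carried out correctly.
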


For Burniat surfaces we obtain a family of quasiphantoms $\A$ with $K_0(\A)=(\ZZ/2\ZZ)^6.$
In the recent paper of A. Kuznetsov \cite{Ku2} it is proved that the second Hochschild cohomology
of $\A$ coincides with the second Hochschild cohomology of a Burniat surface $S$ that is actually $H^1(S, T_S)$
and has dimension 4.

In the paper \cite{GS} the authors considered the  Beauville surface and constructed
a quasiphantom the Grothendieck group of which is isomorphic to $(\ZZ/5\ZZ)^2.$

Using two different quasiphantoms $\A$ and $\A'$ we can try to construct a phantom category taking the product
$\A\prd\A'.$ If the orders of Grothendieck groups $K_0(\A)$ and $K_0(\A')$ are coprime we can hope that the Grothendieck
group of $\A\prd\A'$ will be trivial. In the case of surfaces we prove this. The following two theorems are the main results of the paper.

\begin{theorem}\label{main}
Let $S$ and $S'$ be  smooth projective surfaces over $\CC$ with $q=p_g=0$ for which Bloch's conjecture for 0-cycles holds.
Assume that the derived categories $\mathbf{D}^b(\coh S)$ and $\mathbf{D}^b(\coh S')$ have exceptional collections of maximal
lengths $e(S)$ and $e(S'),$ respectively. Let $\A\subset\mathbf{D}^b(\coh S)$ and $\A'\subset\mathbf{D}^b(\coh S')$ be the left orthogonals
to these exceptional collections. If the orders of $\Pic(S)_{\tors}$ and $\Pic(S')_{\tors}$ are coprime, then the admissible subcategory \mbox{$\A\prd\A'\subset \mathbf{D}^b(\coh(S\times S'))$}
is a phantom category, i.e. $\mathrm{HH}_*(\A\prd\A')=0$ and $K_0(\A\prd\A')=0.$
\end{theorem}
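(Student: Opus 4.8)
The plan is to handle the two invariants separately, using the semiorthogonal decomposition of $\mathbf{D}^b(\coh(S\times S'))$ induced by the decompositions of the factors. Write $\mathbf{D}^b(\coh S)=\langle\D,\A\rangle$ and $\mathbf{D}^b(\coh S')=\langle\D',\A'\rangle$, where $\D$ (resp.\ $\D'$) is generated by the maximal exceptional collection on $S$ (resp.\ $S'$), so $K_0(\D)\cong\ZZ^{e(S)}$ and $K_0(\D')\cong\ZZ^{e(S')}$; the hypotheses $q=p_g=0$ together with maximality of the collections force (via Hochschild--Kostant--Rosenberg, exactly as in the cases recalled before the theorem) that $\A$ and $\A'$ are quasiphantoms, i.e.\ $\mathrm{HH}_*(\A)=\mathrm{HH}_*(\A')=0$ and $K_0(\A)\cong\Pic(S)_{\tors}$, $K_0(\A')\cong\Pic(S')_{\tors}$ are finite. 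By the proposition on products of semiorthogonal decompositions one obtains $\mathbf{D}^b(\coh(S\times S'))=\langle\D\prd\D',\ \D\prd\A',\ \A\prd\D',\ \A\prd\A'\rangle$; in particular $\A\prd\A'$ is admissible, and $K_0$ and $\mathrm{HH}_*$ of $S\times S'$ split accordingly as direct sums of the corresponding invariants of the four pieces.

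For Hochschild homology the cleanest route is to use that, as a DG category, $\A\prd\A'$ is the category of functors $\rhom(\A,\A')$, which for smooth and proper $\A,\A'$ is quasi-equivalent to $\A^{\op}\otimes_{\CC}\A'$; hence the Künneth formula gives $\mathrm{HH}_*(\A\prd\A')\cong\mathrm{HH}_*(\A^{\op})\otimes_{\CC}\mathrm{HH}_*(\A')\cong\mathrm{HH}_*(\A)\otimes_{\CC}\mathrm{HH}_*(\A')=0$. (Equivalently this follows by comparing the $\mathrm{HH}_*$-decomposition coming from the product semiorthogonal decomposition with the Künneth formula for $\mathrm{HH}_*(S\times S')$.)

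For the Grothendieck group the key claim is that $K_0(\A\prd\A')$ is generated by the classes $[\,\pr_1^*N\otimes\pr_2^*M\,]$ with $N\in\A$, $M\in\A'$. Granting this, note that since $\pr_1^*(-)\otimes\pr_2^*M$ and $\pr_1^*N\otimes\pr_2^*(-)$ carry distinguished triangles inside $\A$, resp.\ $\A'$, to distinguished triangles inside $\A\prd\A'$, the assignment $([N],[M])\mapsto[\,\pr_1^*N\otimes\pr_2^*M\,]$ descends to a well-defined bilinear map $K_0(\A)\otimes_{\ZZ}K_0(\A')\to K_0(\A\prd\A')$; thus $K_0(\A\prd\A')$ is a quotient of $K_0(\A)\otimes_{\ZZ}K_0(\A')\cong\Pic(S)_{\tors}\otimes_{\ZZ}\Pic(S')_{\tors}$, and the latter vanishes because the two finite groups have coprime order. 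Hence $K_0(\A\prd\A')=0$, and combined with the vanishing of Hochschild homology this proves the theorem.

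The main obstacle is the generation claim itself. A purely formal difficulty is that $\A\prd\A'$ is, by construction, only the idempotent completion of the triangulated subcategory generated by the objects $\pr_1^*N\otimes\pr_2^*M$, and $K_0$ of a triangulated category can grow strictly under idempotent completion, so split-generation alone is not enough. Given the $K_0$-decomposition above, together with the observation that the external product map $K_0(S)\otimes_{\ZZ}K_0(S')\to K_0(S\times S')$ is injective with image exactly $K_0(\D\prd\D')\oplus K_0(\D\prd\A')\oplus K_0(\A\prd\D')$ (checked on the pieces: on $K_0(\D)\otimes K_0(\D')$ it is the identification coming from the full exceptional collection $(\pr_1^*E_i\otimes\pr_2^*E'_j)$, and on $K_0(\D)\otimes K_0(\A')$, resp.\ $K_0(\A)\otimes K_0(\D')$, it is the identification of $K_0(\A')^{\oplus e(S)}$, resp.\ $K_0(\A)^{\oplus e(S')}$, with the corresponding mixed piece), the generation claim is \emph{equivalent} to the statement that external products generate $K_0(S\times S')$. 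This is a Künneth-type assertion for $K_0$ that is false for general smooth projective varieties, and here is where the hypotheses must be used: Bloch's conjecture together with $q=p_g=0$ forces the rational Chow motives of $S$ and of $S'$ to be of Lefschetz type, so the coniveau filtration on $K_0(S\times S')$ has graded quotients controlled by the Chow groups $CH^i(S\times S')$; coprimality of $|\Pic(S)_{\tors}|$ and $|\Pic(S')_{\tors}|$ kills every $\mathrm{Tor}$- and tensor cross-term in the Künneth exact sequences for the Chow groups and for the (étale) cohomology groups that control the torsion in $CH^2$ and $CH^3$ of the fourfold; and then $CH^\bullet(S\times S')$, torsion included, is generated by external products of cycles on $S$ and $S'$, which one lifts through the filtration to $K_0$. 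This last step — the integral computation of $K_0(S\times S')$, with coprimality doing the work of suppressing all new torsion — is the technical heart of the argument.
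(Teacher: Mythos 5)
Your proposal follows essentially the same route as the paper: reduce to the surjectivity of the external product map $m\colon K_0(S)\otimes_{\ZZ}K_0(S')\to K_0(S\times S')$ (Proposition~\ref{prop-Ksimple}), which forces $K_0(\A\prd\A')$ to be a quotient of $K_0(\A)\otimes_{\ZZ}K_0(\A')=\Pic(S)_{\tors}\otimes_{\ZZ}\Pic(S')_{\tors}=0$, while handling $\mathrm{HH}_*$ by K\"unneth as in Remark~\ref{rem:Hochsch}. The ``technical heart'' you correctly isolate (integral K\"unneth for $CH^\bullet(S\times S')$ plus lifting through the coniveau filtration) is exactly what the paper carries out, with the precise mechanism being the motivic decomposition $M(S)\cong\uno\oplus\Le^{\oplus r}\oplus\Le^2\oplus M$ of Proposition~\ref{prop-Chowdecomp}, the Merkurjev--Suslin bound $N\cdot CH^2(S\times S)_{\tors}=0$ giving $M_{1/N}=0$ (Proposition~\ref{prop-Chow}), and the coprimality-forced vanishing $M\otimes M'=0$ of Corollary~\ref{corol-Chow}; your remark about idempotent completion and $K_0$ is the right reason to argue through $K_0(S\times S')$ rather than through split generators of $\A\prd\A'$ directly.
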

\begin{remark}\label{rem:Hochsch}
{\rm It is evident that the category $\A\prd\A'$ is not trivial, because  any object of the form $\pr_1^* A\otimes\pr_2^* A',$ where $A\in\A$ and $A'\in\A,$ belongs to
the category $\A\prd\A'.$ It is easy to see that the Hochschild homology of $\A\prd\A'$ are trivial. Indeed, Hochschild homology of $\A$ and~$\A'$ are trivial and  there is an isomorphism $\mathrm{HH}_i(X)=\bigoplus_p H^{p+i}(X, \Omega^p_X)$ for a smooth projective variety $X.$ Thus the K\"unneth formula implies triviality
of Hochschild homology of $\A\prd\A'.$ (See also Section~\ref{sec:add} for an alternative approach.)
}
\end{remark}

Actually, we can prove a stronger result.

\begin{theorem}\label{main2} The phantom category $\A\prd\A'\subset \mathbf{D}^b(\coh(S\times S'))$ from Theorem \ref{main}
is a universal phantom and it has a trivial K-theory, i.e. $K_*(\A\prd\A')=0.$
\end{theorem}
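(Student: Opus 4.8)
The plan is to derive both assertions at once from the vanishing of the $K$-motive. By Proposition~\ref{prop:KM-univ}, an admissible subcategory is a universal phantom if and only if its $K$-motive vanishes; and since the $K$-theory spectrum factors through the $K$-motive, the single equality
$$
KM(\A\prd\A')=0
$$
implies in addition that $K_{*}(\A\prd\A')=0$. So everything reduces to this equality. The first ingredient is monoidality: at the level of DG enhancements $\prd$ is the tensor product of DG categories over $\CC$ (cf.\ \cite{To} and the remark above), and, being (the $K$-theoretic version of) the universal additive invariant, $KM(-)$ is symmetric monoidal with respect to these tensor products; hence $KM(\A\prd\A')\simeq KM(\A)\otimes KM(\A')$, and it is enough to prove that this tensor product vanishes. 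This is where the coprimality hypothesis enters.

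Put $N=\#\Pic(S)_{\tors}=\#K_{0}(\A)$ and $N'=\#\Pic(S')_{\tors}=\#K_{0}(\A')$, so that $\gcd(N,N')=1$. The core of the argument is the claim that $KM(\A)_{\QQ}=0$ and $KM(\A)/\ell=0$ for every prime $\ell\nmid N$, and likewise $KM(\A')_{\QQ}=0$ and $KM(\A')/\ell=0$ for every prime $\ell\nmid N'$. Granting this, fix any prime $\ell$: if $\ell\mid N$ then $\ell\nmid N'$, so $(KM(\A)\otimes KM(\A'))/\ell\simeq KM(\A)\otimes(KM(\A')/\ell)=0$; if $\ell\nmid N$ then $(KM(\A)\otimes KM(\A'))/\ell\simeq(KM(\A)/\ell)\otimes KM(\A')=0$. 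Hence multiplication by every nonzero integer is invertible on $KM(\A)\otimes KM(\A')$, so it agrees with its own rationalization, namely $KM(\A)_{\QQ}\otimes_{\QQ}KM(\A')_{\QQ}=0$. Therefore $KM(\A)\otimes KM(\A')=0$, i.e.\ $KM(\A\prd\A')=0$, and both assertions of the theorem follow.

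It remains to prove the claim, which I will do for $\A$ (the case of $\A'$ being identical). Recall $\mathbf{D}^{b}(\coh S)=\langle\D,\A\rangle$ with $\D$ generated by a full exceptional collection of length $e(S)$, so $KM(\mathbf{D}^{b}(\coh S))\simeq KM(\D)\oplus KM(\A)$ and $KM(\D)\simeq KM(\kk)^{\oplus e(S)}$; it therefore suffices to show that the inclusion $\D\hookrightarrow\mathbf{D}^{b}(\coh S)$ induces an equivalence of $K$-motives after rationalization and after $-/\ell$ for each prime $\ell\nmid N$. For the rational statement, Bloch's conjecture $CH^{2}(S)\cong\ZZ$, together with $p_{g}=q=0$, forces the rational Chow motive of $S$ to be a direct sum of $e(S)$ Tate motives; since the functor from Chow motives to noncommutative motives identifies Tate twists, the rational $K$-motive of $S$ becomes $e(S)$ copies of $KM(\kk)_{\QQ}$, which gives the rational equivalence and hence $KM(\A)_{\QQ}=0$. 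For $-/\ell$ with $\ell\nmid N$, I would pass to topological (equivalently étale) $K$-theory: because $q(S)=0$, all torsion of $H^{*}(S(\CC);\ZZ)$ lies in degrees $2$ and $3$ and is a copy of $\Pic(S)_{\tors}$, hence is $N$-primary, so $H^{*}(S(\CC);\ZZ/\ell)$ is a free $\ZZ/\ell$-module concentrated in even degrees of total dimension $e(S)$; the Atiyah--Hirzebruch spectral sequence then degenerates and the map $K^{\mathrm{top}}(\D;\ZZ/\ell)\to K^{\mathrm{top}}(S;\ZZ/\ell)$ is an equivalence. The comparison between algebraic $K$-theory with finite coefficients and topological $K$-theory for DG categories over $\CC$ upgrades this to an equivalence $KM(\D)/\ell\simeq KM(\mathbf{D}^{b}(\coh S))/\ell$ of mod-$\ell$ $K$-motives, whence $KM(\A)/\ell=0$.

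The one substantial point is this last claim — that is, the two comparison inputs: identifying the rational $K$-motive of a surface with trivial $CH_{0}$ with that of its exceptional subcategory (via Bloch's conjecture and the motivic decomposition of such surfaces), and the finite-coefficient comparison of algebraic and topological $K$-theory, which lets the elementary cohomological computation be transported back to $K$-motives. The monoidality of $KM$ and the divisibility bookkeeping of the second paragraph are formal.
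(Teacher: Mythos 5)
Your reduction to showing $KM(\A\prd\A')=0$ via Proposition~\ref{prop:KM-univ}, and the use of the monoidality $KM(\A\prd\A')\cong KM(\A)\otimes KM(\A')$, match the paper. Your divisibility bookkeeping is also essentially equivalent to the paper's: the paper shows $KM(\A\prd\A')$ vanishes after tensoring with $\ZZ[1/N]$ and with $\ZZ[1/N']$, and concludes by coprimality and Bezout applied to $\id$ in the unital ring $\End(KM(\A\prd\A'))$ (what you phrase as ``multiplication by every nonzero integer is invertible'' needs exactly this ring-theoretic remark to be correct, but it is fixable). The genuine gap is in the proof of the key claim that $KM(\A)/\ell=0$ for $\ell\nmid N$. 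In the category $\KM(\kk)$ of this paper, morphisms are $K_0$-correspondences, so $KM(\A)=0$ in $\KM(\CC)_{\ZZ/\ell}$ means, concretely, that the projector $\pi_\A\in K_0(S\times S)$ lies in $\ell\cdot K_0(S\times S)$. Your proposed route — Atiyah--Hirzebruch degeneration mod $\ell$ plus a comparison of algebraic and topological $K$-theory with finite coefficients — would only give information about the $K$-theory spectra of $\A$ and $S$, not about the image of $\pi_\A$ in $K_0(S\times S)\otimes\ZZ/\ell$. The finite-coefficient comparison theorems for $K$-theory over $\CC$ do not control $K_0$ with $\ZZ/\ell$-coefficients of a fourfold like $S\times S$, and there is no step in your sketch that converts vanishing of a spectrum $K^{\mathrm{top}}(\A;\ZZ/\ell)$ into divisibility of $\pi_\A$ in the correspondence group. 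The same remark applies, more mildly, to the rational claim: knowing that $KM(S)_{\QQ}$ and $KM(\D)_{\QQ}$ are both free of rank $e(S)$ over $\uno$ does not by itself force the complementary summand $KM(\A)_{\QQ}$ to vanish; one needs a criterion like Corollary~\ref{corol-Ksummand}.

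What replaces your topological detour in the paper is the integral structural result over $\ZZ[1/N]$: by Merkurjev--Suslin, $M(S)_{1/N}$ is of Lefschetz type (Proposition~\ref{prop-Chow}); then the Euler pairing on $K_0(S)_{1/N}$ is unimodular, the map $a_{1/N}\colon K_0(S\times S)_{1/N}\to\End(K_0(S)_{1/N})$ is an isomorphism (Proposition~\ref{prop-Kend}), and hence $KM(S)_{1/N}$ is of unit type (Proposition~\ref{prop-K}). From this, $\pi_\A$ maps under $a_{1/N}$ to the projection onto $K_0(\A)_{1/N}=0$, so $\pi_\A=0$ in $K_0(S\times S)_{1/N}$, giving $KM(\A)_{1/N}=0$ in one stroke — which is stronger than, and implies, both of your claims. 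The unimodularity step, Proposition~\ref{prop-Kend}(iii), is the piece of the argument you are missing and cannot be extracted from the Marcolli--Tabuada rational statement alone.
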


This theorem together with Proposition \ref{prop:KM-univ} says us that the category  $\A\prd\A'$ has a trivial $K$\!-motive, i.e.
it is in the kernel of the natural map from the world of saturated DG categories to the world of K-motives (they now are
called noncommutative motives).

The main tool for the proof of Theorems \ref{main}, \ref{main2} is the Merkurjev--Suslin result on $K_2$\!-groups
and its corollaries
that allow to control torsion in Chow groups of cycles of codimension 2~\cite{MS}.

Applying Theorem \ref{main2} to recently known examples we obtain a corollary.

\begin{corollary} Let $S$ be a Burniat surface with $c_1^2=6$ and let $S'$ be the classical Godeaux surface over $\CC.$
Let $\A$ and $\A'$ be quasiphantoms from Theorems \ref{th-bur} and \ref{th-god}, respectively. Then the category
$\A\prd\A'\subset\mathbf{D}^b(\coh(S\times S'))$ is a universal phantom category and $K_*(\A\prd\A')=0.$
\end{corollary}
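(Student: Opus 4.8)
The plan is simply to verify that the pair $(S,S')$ falls under the hypotheses of Theorems \ref{main} and \ref{main2}, and then to quote Theorem \ref{main2} verbatim. First I would recall that a Burniat surface with $c_1^2=6$ and the classical Godeaux surface are both smooth projective surfaces of general type over $\CC$ with $q=p_g=0$, and that Bloch's conjecture for $0$-cycles holds for each of them (so that $CH^2\cong\ZZ$, equivalently $K_0(S)\cong\ZZ^{e}\oplus\Pic(S)_{\tors}$, in both cases); this is exactly the input recorded in the text via \cite{IM}. Thus both surfaces lie in the class to which Theorem \ref{main} applies.

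Next I would invoke the two existence results already stated: Theorem \ref{th-bur} provides an exceptional collection $(L_1,\dots,L_6)$ of maximal length $e(S)=6$ in $\mathbf{D}^b(\coh S)$, and Theorem \ref{th-god} provides an exceptional collection $(L_1,\dots,L_{11})$ of maximal length $e(S')=11$ in $\mathbf{D}^b(\coh S')$. By construction the quasiphantoms $\A$ and $\A'$ appearing in those theorems are precisely the left orthogonals to these collections, so they are the categories named in Theorem \ref{main}. Moreover $K_0(\A)\cong(\ZZ/2\ZZ)^6$ and $K_0(\A')\cong\ZZ/5\ZZ$, and since for such surfaces $K_0(\A)\cong\Pic(S)_{\tors}$ and $K_0(\A')\cong\Pic(S')_{\tors}$, the groups $\Pic(S)_{\tors}$ and $\Pic(S')_{\tors}$ have orders $64$ and $5$ respectively.

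The key numerical observation is that $\gcd(64,5)=1$, so the orders of $\Pic(S)_{\tors}$ and $\Pic(S')_{\tors}$ are coprime. Hence all the hypotheses of Theorem \ref{main} are satisfied, and $\A\prd\A'\subset\mathbf{D}^b(\coh(S\times S'))$ is a phantom category. Applying the stronger Theorem \ref{main2} to the very same situation then yields that $\A\prd\A'$ is a universal phantom and that $K_*(\A\prd\A')=0$, which is exactly the assertion of the Corollary.

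There is essentially no genuine obstacle here beyond this verification; the only point requiring slight care is the identification of the category $\A$ (resp. $\A'$) from Theorem \ref{th-bur} (resp. Theorem \ref{th-god}) with the left orthogonal to the maximal exceptional collection, together with the computation $K_0(\A)\cong\Pic(S)_{\tors}$, which is where Bloch's conjecture enters. Once this is in place, the coprimality of $64$ and $5$ is immediate and Theorem \ref{main2} does all the remaining work. (One may remark that the classical Godeaux surface could be replaced by the Beauville surface, for which the torsion group has order $25$, again coprime to $64$.)
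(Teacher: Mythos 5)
Your proof is correct and follows exactly the route the paper intends: the Corollary is stated in the paper as a direct application of Theorem \ref{main2}, and your verification that the Burniat and Godeaux surfaces satisfy its hypotheses (both have $q=p_g=0$, Bloch's conjecture holds, the quasiphantoms of Theorems \ref{th-bur} and \ref{th-god} are the left orthogonals to maximal exceptional collections, and $\gcd(64,5)=1$) is precisely what is needed. The only cosmetic point is that the parenthetical about replacing the Godeaux surface by the Beauville surface ($K_0(\A')\cong(\ZZ/5\ZZ)^2$, order $25$) is already recorded in the paper as a separate remark.
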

\begin{remark}{\rm Due to \cite{GS} instead of the classical Godeaux surface we can take a Beauville surface.
}
\end{remark}
Of course, Theorem~\ref{main2} implies Theorem~\ref{main}. However, we give two separate proofs of these theorems. In Section~\ref{sec:K} we prove Theorem~\ref{main} using results on Chow groups and $K_0$-groups. In Section~\ref{sec:Kmotives} we prove Theorem~\ref{main2} using $K$\!-motives and not using Theorem~\ref{main}.

\section{Chow motives of surfaces of general type with $q=p_g=0$}\label{sec:CH}

First let us fix notation and recollect some general facts concerning Chow motives (general references are~\cite{Manin} and~\cite{Scholl}).
Let $\kk$ be a field. By $X$ we denote an irreducible smooth projective variety over $\kk$ and by $d$ we denote the dimension of $X.$ By $CH^p(X)$ denote the Chow group of codimension $p$ cycles on $X.$ Given a codimension $p$ cycle $Z$ on $X,$ by $[Z]$ denote its class in $CH^p(X).$ Given irreducible smooth projective varieties $X,$ $Y,$ $Z$ and elements \mbox{$f\in CH^p(X\times Y)$}, $g\in CH^q(Y\times Z),$ i.e. correspondences, put
$$
g\circ f:=\pr_{13\,*}(\pr_{12}^*(f)\cdot\pr_{23}^*(g))\,,
$$
where $\pr_{ij}$ denote natural projections from $X\times Y\times Z.$

By $\CH(\kk)$ denote the category of Chow motives over $\kk$ with integral coefficients. Objects in $\CH(\kk)$ are given by triples $M(X,\pi,n),$ where $\pi\in CH^d(X\times X)$ satisfies $\pi\circ\pi=\pi,$ i.e. $\pi$ is a projector, and $n$ is an integer. For $n=0,$ we usually omit $n$ in the latter triple. Morphisms between Chow motives are defined by the formula
$$
\Hom(M(X,\pi,m),M(Y,\rho,n)):=\rho\circ CH^{d+n-m}(X\times Y)\circ\pi
$$
and composition of morphisms is given by composition of correspondences. The Lefschetz motive is defined by the formula $\Le:=(\Spec(\kk),[\Delta],-1)$ and, similarly, $\Le^n:=(\Spec(\kk),[\Delta],-n)$ for~\mbox{$n\in \Z$}.

We have a Chow motive $M(X):=M(X,[\Delta_X]),$ where $\Delta_X\subset X\times X$ is the diagonal. This defines a contravariant functor $X\mapsto M(X)$ from the category of smooth projective varieties over~$\kk$ to $\CH(\kk).$ One has a canonical isomorphism $M(\PP^1)\cong\uno\oplus\Le.$

There is a symmetric tensor structure on $\CH(\kk)$ defined by the formula
$$
M(X,\pi,m)\otimes M(Y,\rho,n):=M(X\times Y,\pi\times\rho,m+n)\,.
$$
The unit object $\uno$ is $M(\Spec(\kk)).$ Moreover, $\CH(\kk)$ is rigid with $M(X,\pi,n)^{\vee}$ being isomorphic to $M(X,\pi^{\mathrm t},d-n),$ where $\pi^{\mathrm t}$ denotes the image of $\pi$ under the natural symmetry on~\mbox{$X\times X$}. In particular, for any natural $n,$ we have $\Le^n\cong \Le^{\otimes n}$ and
$\Le^{-n}\cong (\Le^{\vee})^{\otimes n}.$

Chow groups of Chow motives are defined by the formula $CH^p(X,\pi,n):=\pi(CH^{p+n}(X)),$ where $\pi(-):=\mathrm{pr}_{2*}(\mathrm{pr}_1^*(-)\cdot \pi)$ and $\mathrm{pr}_i\colon X\times X\to X$ are natural projections. In particular, $CH^1(\Le)\cong\ZZ$ and $CH^p(\Le)=0$ for $p\ne 1.$

Any element $\alpha\in CH^p(X)$ defines morphisms
$$
\alpha_*\colon\Le^{p}\to M(X),\quad \alpha^*\colon M(X)\to\Le^{(d-p)}\,,
$$
because $\Spec(\kk)\times X=X\times \Spec(\kk)=X.$ Given another element $\beta\in CH^{d-p}(X),$ we have the equalities
\begin{equation}\label{eq:compos}
\beta^*\circ \alpha_*=(\alpha.\beta)\cdot\id_{\Le^{p}}\,,\quad \alpha_*\circ\beta^*=\beta\times \alpha\,,
\end{equation}
where $(\alpha.\beta)$ denotes the intersection pairing.

Under the isomorphism $M(X)^{\vee}\cong M(X)\otimes\Le^{-d}$ the evaluation morphism $M(X)\otimes M(X)^{\vee}\to\uno$ corresponds to a morphism $M(X)\otimes (M(X)\otimes\Le^{-d})\to\uno.$ The latter morphism is induced by the morphism $[\Delta_X]^*\colon M(X\times X)\to\Le^{d}.$ It follows that, after taking Chow groups, the evaluation morphism corresponds to the intersection pairing.

We say that a Chow motive is of Lefschetz type if it is a direct sum of finitely many motives $\Le^p$ for some (possibly, different) numbers $p.$

For a commutative ring $R,$ one defines similarly the category of Chow motives with \mbox{$R$-coefficients} $\CH(\kk)_R$ based on Chow groups with \mbox{$R$-coefficients} $CH^p(X)_R:=CH^p(X)\otimes_{\ZZ}R.$ We denote Chow motives with $R$-coefficients by $M(X,\pi,n)_R.$ For simplicity, we use the same notations $\uno$ and $\Le$ for the corresponding objects in $\CH(\kk)_R.$ All what was said above about the category $\CH(\kk)$ remains valid for the category $\CH(\kk)_R.$

Given an integer $N,$ we use a standard notation $\ZZ[\frac{1}{N}]$ for the localization of the ring $\ZZ$ over all powers of $N.$ For simplicity, when $R=\ZZ[\frac{1}{N}]$ we use the index $1/N$ instead of $\ZZ[\frac{1}{N}],$ i.e. we use the notation $\CH(\kk)_{1/N},$ $CH^i(X)_{1/N},$ and $M(X,\pi,n)_{1/N}.$

\begin{lemma}\label{lemma-inters}
If the Chow motive $M(X)_R$ of an irreducible smooth projective variety $X$ is of Lefschetz type, then $CH^{\bullet}(X)_R$ is a free $R$-module of finite rank and the intersection pairing on $CH^{\bullet}(X)_R$ is unimodular.
\end{lemma}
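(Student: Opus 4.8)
The plan is to read everything off a chosen splitting of the motive. Fix an isomorphism $h\colon M(X)_R\xrightarrow{\sim}\bigoplus_{i\in I}\Le^{p_i}$ with $I$ finite. Applying $CH^q$, and using that $CH^q(\Le^p)\cong R$ when $q=p$ and $0$ otherwise (as for $\Le=\Le^1$ recalled above), we obtain $CH^q(X)_R=CH^q(M(X)_R)\cong\bigoplus_{i:\,p_i=q}R$ for each $q$; since only finitely many degrees occur, $CH^{\bullet}(X)_R=\bigoplus_q CH^q(X)_R$ is a free $R$-module of finite rank, namely $|I|$. This settles the first assertion.

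For the unimodularity I would turn $h$ into two mutually dual families of algebraic cycles. The formula for morphisms in $\CH(\kk)_R$ identifies $\Hom(\Le^{p_i},M(X)_R)$ with $CH^{p_i}(X)_R$ (via $\alpha\mapsto\alpha_*$) and $\Hom(M(X)_R,\Le^{p_i})$ with $CH^{d-p_i}(X)_R$ (via $\beta\mapsto\beta^*$). Transporting the canonical inclusions and projections of the direct sum along $h$ produces classes $\alpha_i\in CH^{p_i}(X)_R$ and $\beta_i\in CH^{d-p_i}(X)_R$ with $(\beta_j)^*\circ(\alpha_i)_*=\delta_{ij}\,\id$ and $\sum_{i\in I}(\alpha_i)_*\circ(\beta_i)^*=\id_{M(X)_R}$. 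By \eqref{eq:compos}, together with $\Hom(\Le^{p},\Le^{p'})=0$ for $p\ne p'$, these two identities become
$$
(\alpha_i.\beta_j)=\delta_{ij}\ \text{ whenever }\ p_i=p_j,\qquad\qquad\sum_{i\in I}\beta_i\times\alpha_i=[\Delta_X]\ \text{ in }\ CH^d(X\times X)_R,
$$
and transposing the last one also gives $\sum_{i\in I}\alpha_i\times\beta_i=[\Delta_X]$.

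Next I would let the correspondence $[\Delta_X]$, which acts as the identity on Chow groups, act on a class $\gamma\in CH^q(X)_R$ through each of these expressions. A short computation with the projection formula shows that $\beta_i\times\alpha_i$ acts on $CH^q(X)_R$ by $\gamma\mapsto(\gamma.\beta_i)\,\alpha_i$ when $p_i=q$ and by $0$ otherwise, and symmetrically $\alpha_i\times\beta_i$ acts by $\gamma\mapsto(\gamma.\alpha_i)\,\beta_i$ when $p_i=d-q$ and by $0$ otherwise, so that
$$
\gamma=\sum_{i:\,p_i=q}(\gamma.\beta_i)\,\alpha_i=\sum_{i:\,p_i=d-q}(\gamma.\alpha_i)\,\beta_i\qquad\text{for all }\gamma\in CH^q(X)_R.
$$
Thus $\{\alpha_i:p_i=q\}$ generates $CH^q(X)_R$ and $\{\beta_i:p_i=q\}$ generates $CH^{d-q}(X)_R$, and the relations $(\alpha_i.\beta_j)=\delta_{ij}$ make both families $R$-linearly independent; so they are $R$-bases of $CH^q(X)_R$ and $CH^{d-q}(X)_R$ dual to each other under the intersection pairing $CH^q(X)_R\times CH^{d-q}(X)_R\to R$. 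Letting $q$ vary, the intersection pairing on $CH^{\bullet}(X)_R$ admits dual bases, hence is unimodular.

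Conceptually, the second part just expresses that the restriction of $CH^{\bullet}(-)$ to the rigid tensor subcategory of Lefschetz-type motives is a monoidal functor into finite free graded $R$-modules, so it carries the self-duality $M(X)_R^{\vee}\cong M(X)_R\otimes\Le^{-d}$, together with its evaluation morphism --- which, as noted above, induces the intersection pairing on Chow groups --- to a perfect pairing of a finite free module with its dual. The only point needing genuine care is the index bookkeeping: keeping track of which $\Hom$-space corresponds to which Chow group and of the codimension shift caused by the twist $\Le^{-d}$, and, in the concrete argument, verifying that the $\beta_i$ with $p_i=q$ actually span all of $CH^{d-q}(X)_R$ and not merely a full-rank submodule --- which is exactly what the transposed decomposition $\sum_i\alpha_i\times\beta_i=[\Delta_X]$ provides.
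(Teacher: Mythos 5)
Your proof is correct, and both your argument and the paper's rest on the same conceptual observation — that on Lefschetz-type motives, $CH^{\bullet}$ is compatible with duality, and the intersection pairing is the image under $CH^{\bullet}$ of the evaluation morphism. The difference is in how this observation is deployed. The paper states, without further elaboration, that $CH^{\bullet}$ restricted to the full subcategory $\CH_{\Le}(\kk)_R$ of Lefschetz-type motives is an equivalence of rigid symmetric tensor categories onto $\GrFr_R$, and concludes that the intersection pairing becomes the canonical perfect pairing of a free module with its dual; this is slick but relies on the reader accepting the tensor-equivalence claim. Your argument is more elementary and self-contained: you transport the inclusions and projections of the splitting $h$ to classes $\alpha_i\in CH^{p_i}(X)_R$, $\beta_i\in CH^{d-p_i}(X)_R$, translate the identities $(\beta_j)^*\circ(\alpha_i)_*=\delta_{ij}\,\id$ and $\sum_i(\alpha_i)_*\circ(\beta_i)^*=\id$ via equation~\eqref{eq:compos} into $(\alpha_i.\beta_j)=\delta_{ij}$ and $\sum_i\beta_i\times\alpha_i=[\Delta_X]$, and then use the symmetry of $[\Delta_X]$ together with the action on Chow groups to see that in each degree the $\alpha_i$ and $\beta_i$ form genuine dual $R$-bases — correctly flagging that the transposed decomposition is what ensures the $\beta_i$ span rather than merely generating a full-rank submodule. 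In effect you are verifying, by hand, exactly the compatibility with duality that the paper treats as known; what you lose in brevity you gain in transparency, and the explicit dual bases would also be directly usable later (compare the construction of $\pi_2$ in Proposition~\ref{prop-Chowdecomp} or of $\pi$ in Proposition~\ref{prop-K}, which are of the same shape).
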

\begin{proof}
The only non-trivial part is to show that the intersection pairing is unimodular.
Let $\CH_{\Le}(\kk)_R$ be the full subcategory in $\CH(\kk)_R$ formed by Chow motives of Lefschetz type. Note that $\CH_{\Le}(\kk)_R$ is a rigid symmetric tensor category with the same dual objects as in $\CH(\kk)_R.$

By $\GrFr_R$ denote the rigid symmetric tensor category of graded free finite rank $R$-modules (the symmetric structure is defined in the simplest way without the Koszul sign rule). Then we have an equivalence of symmetric tensor categories
$$
CH^{\bullet}\colon\CH_{\Le}(\kk)_R\stackrel{\sim}\longrightarrow \GrFr_R\,.
$$
Therefore the intersection pairing coincides with the evaluation map for $CH^{\bullet}(X)=CH^{\bullet}(M(X)).$ Thus the intersection pairing is unimodular.
\end{proof}

The following two propositions are the main ingredients for the proofs of  Theorems~\ref{main} and  \ref{main2}.

\begin{proposition}\label{prop-Chowdecomp}
Let $S$ be a smooth projective surface over $\CC$ with $q=p_g=0$ for which Bloch's conjecture for 0-cycles holds,
i.e. $CH^2(S)\cong\ZZ.$ Let $r$ be the rank of the Picard lattice $\Pic(S)/\tors.$ Then there is an isomorphism in the category of Chow motives $\CH(\CC)$:
$$
M(S)\cong \uno\oplus\Le^{\oplus r}\oplus\Le^{2}\oplus M
$$
such that \mbox{$CH^1(M)=\Pic(S)_{\tors}$} and $CH^p(M)=0$ for $p\ne 1.$
\end{proposition}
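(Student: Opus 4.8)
The plan is to construct the decomposition of $M(S)$ by producing explicit mutually orthogonal projectors in $CH^2(S\times S)$ summing to the diagonal. First I would set up the standard Chow--K\"unneth type decomposition coming from a rational point and from a choice of ample divisor. Since $S$ is connected, pick a closed point $x\in S$; then $\pi_0 := [x\times S]$ and $\pi_4 := [S\times x]$ are orthogonal projectors with $M(S,\pi_0)\cong\uno$ and $M(S,\pi_4)\cong\Le^2$ (here we use that $\deg\colon CH^0(S)\to\ZZ$ and $\deg\colon CH^2(S)\to\ZZ$ are isomorphisms; the latter is exactly Bloch's conjecture $CH^2(S)\cong\ZZ$, and it also guarantees $\pi_0$ and $\pi_4$ are independent of the choice of $x$ up to rational equivalence, so they are genuinely orthogonal and don't interact with the rest).

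Next I would carve out the Lefschetz part in the middle. Let $h_1,\dots,h_r\in\Pic(S)/\tors$ be a basis and lift them to divisor classes $D_i\in CH^1(S)$; let $(a_{ij})$ be the inverse of the intersection matrix $(D_i.D_j)$ over $\QQ$ — but to stay integral I would instead argue as follows. The classes $(D_i)_*\colon\Le\to M(S)$ and $(D_j)^*\colon M(S)\to\Le$ satisfy $(D_j)^*\circ(D_i)_* = (D_i.D_j)\cdot\id_{\Le}$ by the formula \eqref{eq:compos}. Since $q=p_g=0$, the variety $S$ has no transcendental part in $H^2$, so $\Pic(S)\otimes\QQ = H^2(S,\QQ)$ and the intersection form on $NS(S)\otimes\QQ$ is nondegenerate; hence over $\QQ$ we can build a projector $\pi_2^{\mathrm{alg}}=\sum_{i,j}a_{ij}\,(D_i)_*\circ(D_j)^*$ onto $\Le^{\oplus r}$. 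To get an integral statement I would not need unimodularity of the lattice: instead, observe that after splitting off $\pi_0\oplus\pi_4$, the complementary motive $M(S)' := M(S,\Delta_S-\pi_0-\pi_4)$ has $CH^p(M(S)')=0$ for $p\ne 1$ and $CH^1(M(S)')=\Pic(S)$; I would then split $\Pic(S)=\Pic(S)/\tors\oplus\Pic(S)_{\tors}$ (noncanonically, since $\Pic(S)_{\tors}$ is finite) and show this splitting is realized by a further decomposition of the motive. The point is that the correspondences realizing the free part are, up to the rational coefficients $a_{ij}$, already integral, and one can clear denominators by passing through the category $\CH_\Le$: the free summand $\Le^{\oplus r}$ of $M(S)'$ is detected by the subgroup $\Pic(S)/\tors\subset\Pic(S)=CH^1(M(S)')$, and since $\mathrm{Hom}(\Le^{\oplus r},\Le^{\oplus r})$-valued matrices over $\ZZ$ act faithfully on this free module, the idempotent cutting out $\Le^{\oplus r}$ lifts to an integral projector by standard idempotent-lifting over $\ZZ$ applied to the (finite-rank free) endomorphism ring, together with the vanishing of the higher Chow groups which kills obstructions.

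Once I have orthogonal projectors $\pi_0,\pi_2^{\mathrm{alg}},\pi_4$ (with $\pi_2^{\mathrm{alg}}$ cutting out $\Le^{\oplus r}$), I set $\pi_M := \Delta_S - \pi_0 - \pi_2^{\mathrm{alg}} - \pi_4$ and $M := M(S,\pi_M)$. Taking Chow groups and using additivity of $CH^\bullet$ along the direct sum, together with $CH^\bullet(\uno)=(\ZZ,0,0)$, $CH^\bullet(\Le^{\oplus r})=(0,\ZZ^r,0)$, $CH^\bullet(\Le^2)=(0,0,\ZZ)$, I read off $CH^0(M)=0$, $CH^2(M)=0$, and $CH^1(M)=\Pic(S)/(\text{image of }\Le^{\oplus r}) = \Pic(S)_{\tors}$. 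For degree reasons (no odd cohomology since $q=0$, and $CH^p(S)=0$ for $p\ge 3$), there is nothing in other codimensions, so $CH^p(M)=0$ for $p\ne 1$, as claimed.

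The main obstacle, and the step where I expect to spend real effort, is the integrality: producing the projectors over $\ZZ$ rather than over $\QQ$. Over $\QQ$ everything is the classical Chow--K\"unneth decomposition of a surface with $p_g=0$ and is essentially formal. The subtlety is that the intersection lattice $NS(S)/\tors$ need not be unimodular, so one cannot naively invert the intersection matrix; one must instead argue that the idempotent endomorphism of the motive $M(S)'$ corresponding to the projection $\Pic(S)\twoheadrightarrow\Pic(S)/\tors\hookrightarrow\Pic(S)$ is represented by an integral correspondence. I would handle this by noting that the full subcategory of $\CH(\CC)$ generated by $\uno$, $\Le$, $\Le^2$ and $M(S)'$ is controlled, on Chow groups, by the single finitely generated abelian group $\Pic(S)$ placed in degree one, and invoking the equivalence-type statement used in Lemma~\ref{lemma-inters} together with idempotent lifting in the (honest, discrete) endomorphism rings to descend the rational projector to an integral one; the vanishing $CH^p(M(S)')=0$ for $p\ne 1$ is exactly what makes the endomorphism calculus reduce to linear algebra over $\ZZ$ on $\Pic(S)$ and removes any higher obstruction.
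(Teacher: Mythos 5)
Your overall plan (take $\pi_0=[s\times S]$, $\pi_4=[S\times s]$, split off a Lefschetz piece in the middle and call the rest $M$) is the same as the paper's. However, there is a genuine error at the heart of your argument, precisely at the step you yourself flag as ``the main obstacle'': you assert that ``the intersection lattice $NS(S)/\tors$ need not be unimodular'' and then attempt a workaround, but under the running hypotheses $q=p_g=0$ the lattice \emph{is} unimodular, and that is exactly what the paper uses. Indeed, $H^1(S,\O_S)=H^2(S,\O_S)=0$ together with the exponential exact sequence gives an isomorphism of abelian groups $\Pic(S)\cong H^2(S(\CC),\ZZ)$, and integral Poincar\'e duality for the closed oriented $4$\!-manifold $S(\CC)$ says that the cup-product pairing on $H^2(S(\CC),\ZZ)/\tors$ is unimodular. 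With unimodularity in hand, one chooses a basis $\{D_i\}$ of $\Pic(S)/\tors$ and the \emph{dual} basis $\{E_i\}$, and formula~\eqref{eq:compos} immediately shows $\pi_2:=\sum_i [D_i\times E_i]$ is an integral projector with $(S,\pi_2)\cong\Le^{\oplus r}$. You only use the weaker rational identification $\Pic(S)\otimes\QQ=H^2(S,\QQ)$, which is why you ended up fighting a nonexistent integrality problem.

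Your proposed substitute --- ``idempotent lifting over $\ZZ$'' on the endomorphism ring, justified by the vanishing of Chow groups outside degree one and an appeal to the equivalence in Lemma~\ref{lemma-inters} --- does not work as stated. Idempotent lifting is not available over $\ZZ$ in general, and the appeal to Lemma~\ref{lemma-inters} is circular: that lemma concerns motives already known to be of Lefschetz type, whereas the issue is exactly whether the putative summand $\Le^{\oplus r}$ sits inside $M(S)'$ integrally. Concretely, the correspondences that can hit the free part of $\Pic(S)$ are, via~\eqref{eq:compos}, essentially of the form $\sum a_{ij}\,D_i\times D_j$, whose action on $\Pic(S)/\tors$ is $\alpha\mapsto\sum a_{ij}(D_i.\alpha)D_j$; producing the identity this way requires inverting the intersection matrix over $\ZZ$, i.e.\ unimodularity. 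So the idempotent $\Pic(S)\twoheadrightarrow\Pic(S)/\tors\hookrightarrow\Pic(S)$ need not a priori lift to an integral correspondence if the form were not unimodular, and the vanishing of $CH^0$ and $CH^2$ of $M(S)'$ does not remove this obstruction. The fix is simply to invoke the integral isomorphism $\Pic(S)\cong H^2(S(\CC),\ZZ)$ and Poincar\'e duality, after which the proof collapses to the dual-basis construction and becomes what the paper does.
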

\begin{proof}
Take a point $s\in S$ and consider the projectors $\pi_0:=[s\times S]$ and $\pi_4:=[S\times s]$ in $CH^2(S\times S).$ It is well-known that $(S,\pi_0)\cong \uno$ and $(S,\pi_4)\cong \Le^{2}.$

Since $H^i(S,\O_S)=0$ for $i=1,2,$ it follows from the exponential exact sequence that $\Pic(S)\cong H^2(S(\CC),\Z),$ where $S(\CC)$ denotes the topological space of complex points on $S.$ Hence Poincar\'e duality for singular cohomology asserts that the intersection product on $\Pic(S)/\tors$ is unimodular. Let $\{D_i\},$ $1\leqslant i\leqslant r,$ be a collection of divisors on $S$ whose classes give a basis in $\Pic(S)/{\tors}.$ Further, let $\{E_i\},$ $1\leqslant i\leqslant r,$ be a collection of divisors on $S$ whose classes give the dual basis in $\Pic(S)/{\tors}.$ It follows from equation~\eqref{eq:compos} that $\pi_2:=\sum\limits_{i=1}^r [D_i\times E_i]$ is a projector in $CH^2(S\times S)$ and that there is an isomorphism of Chow motives $(S,\pi_2)\cong\Le^{\oplus r}.$

By codimension reason, the projectors $\pi_0,$ $\pi_2,$ $\pi_4$ are orthogonal, i.e. $\pi_i\circ\pi_j=0$ for $i\ne j.$ Therefore we obtain a projector $\pi:=[\Delta_S]-\pi_0+\pi_2+\pi_4$ and a decomposition
$$
M(S)\cong \uno\oplus\Le^{\oplus r}\oplus\Le^{2}\oplus M\,,
$$
where $M:=M(S,\pi).$ It follows that \mbox{$CH^1(M)=\Pic(S)_{\tors}$} and $CH^p(M)=0$ for $p\ne 1.$
\end{proof}

\begin{proposition}\label{prop-Chow}
Let $S,$ $r,$ and $M$ be as in Proposition~\ref{prop-Chowdecomp}. Let $N$ be a non-zero integer such that $N\cdot\Pic(S)_{\tors}=0.$ Then $M_{1/N}=0$ and $M(S)_{1/N}$ is of Lefschetz type, namely, there is an isomorphism in the category $\CH(\CC)_{1/N}$:
$$
\mbox{$
M(S)_{1/N}\cong \uno\oplus \Le^{\oplus r}\oplus \Le^{2}\,. $}
$$
\end{proposition}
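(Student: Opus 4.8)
The plan is to reduce the whole statement to the vanishing, after inverting $N$, of the idempotent $\pi\in CH^2(S\times S)$ that cuts out $M$, and then to prove that $\pi$ is a torsion class whose torsion is $N$-primary. First I would base change the isomorphism of Proposition~\ref{prop-Chowdecomp} along $\ZZ\to\ZZ[\frac1N]$: since $CH^p(\,\cdot\,)_{1/N}=CH^p(\,\cdot\,)\otimes_{\ZZ}\ZZ[\frac1N]$, one gets $M(S)_{1/N}\cong\uno\oplus\Le^{\oplus r}\oplus\Le^{2}\oplus M_{1/N}$, so it suffices to prove $M_{1/N}=0$; the displayed shape of $M(S)_{1/N}$, and hence the fact that it is of Lefschetz type, will follow. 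Writing $M=M(S,\pi)$ with $\pi=[\Delta_S]-\pi_0-\pi_2-\pi_4\in CH^2(S\times S)$ as in the proof of Proposition~\ref{prop-Chowdecomp}, the identity endomorphism of $M_{1/N}$ is the image of $\pi$ under $CH^2(S\times S)\to CH^2(S\times S)_{1/N}$, so $M_{1/N}=0$ is equivalent to $\pi$ being annihilated by a power of $N$ in $CH^2(S\times S)$.

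Next I would check that $\pi$ is a torsion class. Since $q=p_g=0$, one has $H^{1}(S,\QQ)=H^{3}(S,\QQ)=0$ and $H^{2}(S,\QQ)=\NS(S)_{\QQ}$, and the classes $\{[D_i]\}$, $\{[E_i]\}$ form Poincar\'e-dual bases of $H^{2}(S,\QQ)$; hence the Betti realizations of $\pi_0$, $\pi_2$, $\pi_4$ are the projectors onto $H^0$, $H^2$, $H^4$, and $\pi$ realizes to $0$. As Bloch's conjecture holds for $S$, the motive $M(S)_{\QQ}$ is finite-dimensional, so any direct summand with vanishing Betti realization must vanish; thus $M_{\QQ}=0$, equivalently $[\Delta_S]=\pi_0+\pi_2+\pi_4$ in $CH^2(S\times S)_{\QQ}$, and $\pi$ is torsion in $CH^2(S\times S)$.

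Finally I would localize this torsion at the prime divisors of $N$, which is the substantive point. By the Merkurjev--Suslin theorem on $K_2$ and its corollaries on codimension-two cycles \cite{MS}, the torsion subgroup of $CH^2$ of a smooth projective complex variety is finite and is controlled by the torsion of its integral singular cohomology in low degrees. For $X=S\times S$ this torsion is $N$-primary: indeed $H^{i}(S,\ZZ)$ is torsion-free for $i\neq 3$ while $H^{3}(S,\ZZ)\cong\Pic(S)_{\tors}$ (from $q=p_g=0$, Poincar\'e duality and universal coefficients), so by the K\"unneth formula every torsion subgroup of $H^{*}(S\times S,\ZZ)$ is annihilated by $N$; hence $CH^2(S\times S)_{\tors}$ is $N$-primary. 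Consequently $CH^2(S\times S)_{1/N}$ is torsion-free, the image of the torsion class $\pi$ in it is $0$, and therefore $M_{1/N}=0$, giving $M(S)_{1/N}\cong\uno\oplus\Le^{\oplus r}\oplus\Le^{2}$, a motive of Lefschetz type.

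I expect this last step to be the main obstacle: the first two reductions are bookkeeping with the decomposition already in hand, whereas turning the $K_2$/norm-residue statement of Merkurjev--Suslin into effective control of the torsion in $CH^2$ of the fourfold $S\times S$ — and correctly identifying the relevant cohomological torsion — is where the real work lies; this is also the ingredient the introduction singles out as the main tool of the paper.
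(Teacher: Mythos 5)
Your plan reproduces the paper's architecture: reduce to showing the projector $\pi$ cutting out $M$ is killed by $N$ after localization, first prove $M_{\QQ}=0$ (so $\pi$ is torsion), then control $CH^2(S\times S)_{\tors}$ via Merkurjev--Suslin. The one genuinely different choice is in the middle step. You invoke Kimura--O'Sullivan finite dimensionality of $M(S)_{\QQ}$ (which for a surface with $p_g=0$ is known to be \emph{equivalent} to Bloch's conjecture, by Guletskii--Pedrini) together with the nilpotence theorem to kill the summand with trivial Betti realization. The paper instead cites Lemma~1 of~\cite{GG}: over $\CC$ a rational Chow motive with vanishing Chow groups is zero, applied to $M_{\QQ}$ whose Chow groups are all torsion by Proposition~\ref{prop-Chowdecomp}. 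Both routes work; the paper's is closer to the inputs already in hand (it needs only the Chow groups computed in the previous proposition, not finite dimensionality as a black box), but yours is a legitimate alternative.

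Two points need tightening. First, a factual slip: you assert that $H^i(S,\ZZ)$ is torsion-free for $i\neq3$. That is false; from the exponential sequence $H^2(S(\CC),\ZZ)\cong\Pic(S)$, so $H^2(S,\ZZ)_{\tors}\cong\Pic(S)_{\tors}$ as well (and by universal coefficients $H^3(S,\ZZ)$ carries the same torsion). This does not damage your conclusion, since all torsion in $H^{\bullet}(S,\ZZ)$ is still killed by $N$, but the claim as written is wrong. Second, and more substantively, the last step skips an argument: Merkurjev--Suslin embeds $CH^2(T)_{\tors}$ into $H^3(T(\CC),\QQ/\ZZ)$, not into $H^{\bullet}(T,\ZZ)_{\tors}$. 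The group $H^3(T,\QQ/\ZZ)$ sits in the extension
$$
0\lto H^3(T,\QQ)/H^3(T,\ZZ)\lto H^3(T,\QQ/\ZZ)\lto H^4(T,\ZZ)_{\tors}\lto 0\,,
$$
whose left term is divisible, hence cannot be annihilated by $N$ unless it vanishes. So one must also record that $H^3(T,\QQ)=0$ (which does follow, since $H^{\mathrm{odd}}(S,\QQ)=0$ and one applies K\"unneth), before passing to $H^4(T,\ZZ)_{\tors}$. The paper makes exactly this observation; your write-up gestures at it implicitly but the step from ``$H^{\bullet}(T,\ZZ)_{\tors}$ is $N$-torsion'' to ``$CH^2(T)_{\tors}$ is $N$-primary'' is not immediate without it.
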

\begin{proof}
Clearly, it is enough to show that $M_{1/N}=0.$ Let $M=M(S,\pi),$ $\pi\in CH^2(S\times S),$ be the motive from the decomposition given by Proposition~\ref{prop-Chowdecomp}. We claim that the motive $M_{\QQ}$ vanishes in $\CH(\CC)_{\QQ}.$ There are several possible ways to do this (for example, see Proposition 14.2.3 and Corollary 14.4.9 in~\cite{KMP}). Let us explain one of them. Lemma~1 from~\cite{GG} says that a Chow motive in $\CH(\CC)_{\QQ}$ with trivial Chow groups is trivial (one applies this lemma with $\Omega=\CC$ and $k$ being a minimal field of definition of a given Chow motive). By Proposition~\ref{prop-Chowdecomp}, all groups $CH^p(M)$ are torsion. Thus all Chow groups $CH^p(M_{\QQ})=CH^p(M)_{\QQ}$ vanish and by the above we have $M_{\QQ}=0.$ Consequently, the element $\pi\in CH^2(S\times S)$ is torsion.

Now let us prove that $N\cdot\pi=0$ in $CH^2(S\times S).$ For short, put $T:=S\times S.$ By a result of Merkurjev and Suslin, see Corollary 18.3 in \cite{MS}, the group $CH^2(T)_{\tors}$ is a subquotient (actually a subgroup) of the group $H^3_{\acute e t}(T,\QQ/\ZZ(2)).$ Choosing a compatible system of roots of unity, we get an isomorphism $H^3_{\acute e t}(T,\QQ/\ZZ(2))\cong H^3(T(\CC),\QQ/\ZZ).$

Let us describe the latter group explicitly. The exact sequence of constant sheaves on~$T(\CC)$
$$
0\to\ZZ\to\QQ\to\QQ/\ZZ\to 0
$$
leads to the exact sequence of abelian groups
\begin{equation}\label{eq:QZ}
0\to H^3(T(\CC),\QQ)\,/\,H^3(T(\CC),\ZZ)\to H^3(T(\CC),\QQ/\ZZ)\to H^4(T(\CC),\ZZ)_{\tors}\to 0\,.
\end{equation}
Poincar\'e duality for $S(\CC)$ implies that $H^i(S(\CC),\QQ)=0$ for odd $i$ and $N\cdot H^{\bullet}(S(\CC),\ZZ)_{\tors}=0.$ K\"unneth formula shows that the same is true for $T=S\times S$: $H^i(T(\CC),\QQ)=0$ for odd $i$ and $N\cdot H^{\bullet}(T(\CC),\ZZ)_{\tors}=0.$ Therefore, by equation~\eqref{eq:QZ}, we have
$N\cdot H^3(T(\CC),\QQ/\ZZ)=0.$

We conclude that $N\cdot CH^2(S\times S)_{\tors}=0.$ In particular,
$N\cdot \pi=0,$ whence $M_{1/N}=0.$ This finishes the proof.
\end{proof}

\begin{corollary}\label{corol-Chow}
Assume that surfaces $S$ and $S'$ satisfy all conditions from Proposition~\ref{prop-Chowdecomp}. Let~$M$ and $M'$ be the Chow motives from Proposition~\ref{prop-Chowdecomp} and let $N$ and $N'$ be the integers from Proposition~\ref{prop-Chow} corresponding to $S$ and $S',$ respectively. Suppose that $N$ and $N'$ are coprime. Then the following is true:
\begin{itemize}
\item[(i)]
the tensor product vanishes $M\otimes M'=0$ in the category $\CH(\kk)$;
\item[(ii)]
the external product map is an isomorphism:
$$
CH^{\bullet}(S)\otimes_{\ZZ}CH^{\bullet}(S')\stackrel{\sim}\longrightarrow CH^{\bullet}(S\times S')\,.
$$
\end{itemize}
\end{corollary}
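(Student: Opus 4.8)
The plan is to deduce both statements by purely formal manipulations, the genuine arithmetic (Merkurjev--Suslin) having already been spent in the proof of Proposition~\ref{prop-Chow}. For part~(i) I would write $M=M(S,\pi)$ and $M'=M(S',\pi')$ with $\pi\in CH^2(S\times S)$, $\pi'\in CH^2(S'\times S')$ the projectors furnished by Proposition~\ref{prop-Chowdecomp}. Under the description $\End(M)=\pi\circ CH^2(S\times S)\circ\pi$ the identity morphism $\id_M$ is the class $\pi$; and in the proof of Proposition~\ref{prop-Chow} it was shown that $N\cdot\pi=0$, so $N\cdot\id_M=0$, and the same argument gives $N'\cdot\id_{M'}=0$. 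Since $\otimes$ is bilinear on morphisms, $\id_{M\otimes M'}=\id_M\otimes\id_{M'}$ is annihilated by $N$ and by $N'$. As $\gcd(N,N')=1$, we may write $1=aN+bN'$ with $a,b\in\ZZ$, and then $\id_{M\otimes M'}=(aN+bN')\cdot\id_{M\otimes M'}=0$; hence $M\otimes M'=0$.

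For part~(ii) I would start from the motivic decompositions of Proposition~\ref{prop-Chowdecomp},
$$
M(S)\cong\uno\oplus\Le^{\oplus r}\oplus\Le^{2}\oplus M, \qquad M(S')\cong\uno\oplus\Le^{\oplus r'}\oplus\Le^{2}\oplus M',
$$
coming from orthogonal decompositions $[\Delta_S]=\pi_0+\pi_2+\pi_4+\pi$ and $[\Delta_{S'}]=\pi_0'+\pi_2'+\pi_4'+\pi'$. Tensoring, using $M(S\times S')\cong M(S)\otimes M(S')$ and $M\otimes M'=0$ from part~(i), exhibits $M(S\times S')$ as a direct sum of motives $P\otimes P'$ with $P\in\{\uno,\Le,\Le^{2},M\}$, $P'\in\{\uno,\Le,\Le^{2},M'\}$ and $(P,P')\ne(M,M')$. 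The external product map is block-diagonal for these splittings: for a component $f$ of $[\Delta_S]$ and $g$ of $[\Delta_{S'}]$ one has $(f\times g)(\alpha\times\beta)=f(\alpha)\times g(\beta)$, so the $(f,g)$-summand of $CH^{\bullet}(S)\otimes_{\ZZ}CH^{\bullet}(S')$ lands in $CH^{\bullet}\big((S,f)\otimes(S',g)\big)$. It then remains to check that each block is an isomorphism: if $P,P'$ are both Lefschetz motives it is $\ZZ\otimes_{\ZZ}\ZZ\stackrel{\sim}{\longrightarrow}\ZZ=CH^{\bullet}(\Le^{a+b})$; if $P$ is Lefschetz and $P'=M'$ (or symmetrically) it is $\ZZ\otimes_{\ZZ}CH^{\bullet}(M')\stackrel{\sim}{\longrightarrow}CH^{\bullet}(\Le^{a}\otimes M')$, using $CH^{\bullet}(\Le^{a})=\ZZ$ and that tensoring with $\Le^{a}$ only shifts the grading; and the remaining block $CH^{\bullet}(M)\otimes_{\ZZ}CH^{\bullet}(M')=\Pic(S)_{\tors}\otimes_{\ZZ}\Pic(S')_{\tors}$ vanishes, since $N$ kills the first factor, $N'$ the second, and $\gcd(N,N')=1$, matching $CH^{\bullet}(M\otimes M')=0$. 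Summing the blocks gives the isomorphism of part~(ii).

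I do not expect a serious obstacle: the single point that needs care is to unwind, with the paper's conventions $CH^{p}(A)=\Hom(\Le^{p},A)$, the compatibility of the external intersection product with the correspondence action of a product projector $f\times g$, so that functoriality and additivity of $CH^{\bullet}$ in the motive really do yield the block decomposition used above. Everything else is formal, and all the arithmetic content has been absorbed into Proposition~\ref{prop-Chow}.
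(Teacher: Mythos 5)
Your proof is correct and follows essentially the same route as the paper's: for (i) the paper notes that $\Hom(L,M)$ is $N$-torsion for every $L$ (which, via $\End(M)$, is exactly your $N\cdot\id_M=0$ coming from $N\cdot\pi=0$), deduces $\Hom(L,M\otimes M')$ is simultaneously $N$- and $N'$-torsion, and concludes by Yoneda, while you conclude by Bezout applied to $\id_{M\otimes M'}$ — the same argument in a different wrapper; for (ii) the paper also tensors the two decompositions from Proposition~\ref{prop-Chowdecomp}, uses $M\otimes M'=0$, and ``compares'' Chow groups, which is precisely the block-by-block check you have spelled out. Your version is simply more explicit; the one small caveat you correctly flag (compatibility of the external product with the action of the product projector $f\times g$) is a standard fact about correspondences and is implicitly used by the paper as well.
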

\begin{proof}
By Proposition~\ref{prop-Chow}, for any motive $L$ in $\CH(\kk),$ the group $\Hom(L,M)$ is $N$-torsion. Hence the group $\Hom(L,M\otimes M')$ is $N$-torsion as well. Analogously, $\Hom(L,M\otimes M')$ is $N'$-torsion, which implies the vanishing from~(i) by Yoneda lemma.

Now by Proposition~\ref{prop-Chowdecomp}, we see that the Chow motive $M(S\times S')$ is decomposed into powers of the Lefschetz motive and Lefschetz twists of $M$ and $M'.$ This gives an explicit description of the Chow groups of~$S\times S'.$ Comparing this with the tensor product $CH^{\bullet}(S)\otimes_{\ZZ}CH^{\bullet}(S'),$ we obtain~(ii).
\end{proof}

\begin{remark}{\rm
Actually, Propositions~\ref{prop-Chowdecomp},~\ref{prop-Chow} and Corollary~\ref{corol-Chow} remain true if one replaces $\CC$ by any algebraically closed field of infinite transcendence degree over its prime subfield.}
\end{remark}

\section{Grothendieck group $K_0$ and the proof of Theorem \ref{main}}\label{sec:K}

Let us fix notation and recollect some general facts concerning $K_0$\!-groups (general references are \cite[Exp.0]{BGI} and \cite{Ful}).

As above, let $\kk$ be an arbitrary field and by $X$ denote an irreducible smooth projective variety over $\kk$ of dimension $d.$ By~$K_0(X)$ denote the Grothendieck group of vector bundles on $X$ (equivalently, of coherent sheaves
on $X$). Given a vector bundle $E$ on $X,$ by $[E]$ denote its class in $K_0(X)$ (the same for coherent
sheaves).

There is a canonical decreasing filtration $F^pK_0(X),$ $p\geqslant 0,$ by codimension of support: an
element in $K_0(X)$ belongs to $F^pK_0(X)$ if and only if it can be represented as a linear combination
of elements of type $[\F],$ where $\F$ is a coherent sheaf whose support codimension is at least $p.$

One has a natural homomorphism
\begin{equation}\label{eq:varphi}
\varphi\colon CH^{\bullet}(X)\to \gr^{\bullet}_F K_0(X)
\end{equation}
that sends $Z$ to the class of $[\O_Z]$ for an irreducible codimension $p$ subvariety $Z$ in $X.$ Important facts are that $\varphi$ is surjective and $\varphi_{\,\QQ}$ is an isomorphism (the inverse to $\varphi_{\,\QQ}$ is induced by the Chern character).

The abelian group $K_0(X)$ has a natural commutative ring structure with $[E]\cdot[F]:=[E\otimes F]$ for vector bundles $E$ and $F$ on $X.$ This product respects the filtration $F^{\bullet}$:
$$
F^pK_0(X)\cdot F^qK_0(X)\subset F^{p+q}K_0(X)\,.
$$
This induces a graded ring structure on $\gr^{\bullet}_F K_0(X)$ such that $\varphi$ is a morphism of graded rings.

Further, we have an additive homomorphism $\chi\colon K_0(X)\to\ZZ$
that sends $[E]$ to the Euler characteristic $\chi(X,E)$ (this equals the push-forward map associated to the morphism $X\to \Spec(\kk)$). This leads to the pairing
$$
\langle\cdot,\cdot\rangle\colon K_0(X)\otimes K_0(X) \to \ZZ,\quad [E]\otimes[F]\mapsto \chi(X,E\otimes F)\,.
$$
It follows that $\langle F^pK_0(X),F^{d+1-p}K_0(X)\rangle=0$ and we obtain a pairing between
$\gr^p_FK_0(X)$ and $\gr^{d-p}_FK_0(X),$ which we denote by $\gr\,\langle\cdot,\cdot\rangle.$ The map $\varphi$ sends the intersection pairing between Chow groups to the pairing~$\gr\,\langle\cdot,\cdot\rangle.$

By $m$ denote the external product map
\begin{equation}\label{eq:mult}
m\colon K_0(X)\otimes_{\ZZ} K_0(Y)\to K_0(X\times Y)\,.
\end{equation}
By $a$ denote the map
\begin{equation}\label{eq:act}
a\colon K_0(X\times Y)\lto \Hom_{\ZZ}(K_0(X),K_0(Y))\,,\quad
f\mapsto \{\alpha\mapsto \mathrm{pr}_{2*}(\mathrm{pr}_1^*(\alpha)\cdot f)\}\,,
\end{equation}
where $\mathrm{pr}_i$ are natural projections from $X\times Y.$

\begin{proposition}\label{prop-Ksimple}
Let $S$ and $S'$ be surfaces as in Corollary~\ref{corol-Chow}, in particular, $N$ and $N'$ are coprime and Bloch's conjecture holds for $S$ and $S'.$ Then we have a surjection
$$
m\colon K_0(S)\otimes_{\ZZ} K_0(S')\twoheadrightarrow K_0(S\times S')\,.
$$
\end{proposition}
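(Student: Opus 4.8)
The plan is to reduce the surjectivity of $m$ to the analogous statement for Chow groups, which is part~(ii) of Corollary~\ref{corol-Chow}, using the filtration $F^{\bullet}K_0$ and the map $\varphi$. First I would observe that the external product $m$ is compatible with the codimension-of-support filtrations, in the sense that $m\bigl(F^pK_0(S)\otimes F^qK_0(S')\bigr)\subset F^{p+q}K_0(S\times S')$; this is immediate since a product of sheaves supported in codimensions $p$ and $q$ (after resolving and using Tor-independence away from the diagonal, or simply working with the product of cycles) is supported in codimension $\ge p+q$. Hence $m$ induces a map on associated graded pieces, and there is a commutative square relating $m$ on $K_0$, the external product on $CH^{\bullet}$, and the surjections $\varphi$ for $S$, $S'$, and $S\times S'$.

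The key step is then a standard filtration-induction argument. Since both $F^{\bullet}K_0(S\times S')$ is exhausted by finitely many steps ($F^pK_0=0$ for $p>4$) and $\varphi\colon CH^{\bullet}(S\times S')\to\gr^{\bullet}_F K_0(S\times S')$ is surjective, it suffices to show that every element of $\gr^p_F K_0(S\times S')$ lies in the image of $m$ modulo $F^{p+1}$. By surjectivity of $\varphi$ for $S\times S'$, such an element is hit by a cycle class, and by Corollary~\ref{corol-Chow}(ii) that cycle class is an external product of cycle classes on $S$ and $S'$. Applying $\varphi$ for $S$ and $S'$ and the compatibility square, we lift these to classes in $K_0(S)$ and $K_0(S')$ whose external product agrees with the original element modulo $F^{p+1}$. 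Descending induction on $p$ (starting from $p=5$, where the group is zero) then shows $m$ is surjective onto all of $K_0(S\times S')$.

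I expect the main obstacle to be the verification of the compatibility square at the level of associated graded groups: one must check that $m$ on $\gr^{\bullet}_F$ really does correspond, via $\varphi\otimes\varphi$ and $\varphi$, to the external product of cycles $CH^{\bullet}(S)\otimes CH^{\bullet}(S')\to CH^{\bullet}(S\times S')$. This is a routine but slightly delicate point, because $\varphi$ is only an isomorphism rationally, and one has to argue directly with classes of structure sheaves $[\O_Z]$ and the behavior of $\pr_1^*$, $\pr_2^*$, tensor product, and support codimension — in particular that $[\O_{Z\times Z'}] = \pr_1^*[\O_Z]\cdot\pr_2^*[\O_{Z'}]$ in $\gr^{\bullet}_F$, which holds because $Z$ and $Z'$ meet the relevant loci properly after a flat base change, so the higher Tor-sheaves are supported in strictly higher codimension. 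Granting this, everything else is formal diagram-chasing with the finite filtration. (An alternative, essentially equivalent, route would be to tensor with $\QQ$, use that $\varphi_{\QQ}$ is an isomorphism to get surjectivity of $m_{\QQ}$ from Corollary~\ref{corol-Chow}(ii), and then control the integral statement via the filtration; but the graded-ring formulation above seems cleanest.)
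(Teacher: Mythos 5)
Your proposal is correct and follows essentially the same approach as the paper: both reduce to surjectivity on the associated graded pieces of the finite codimension-of-support filtration, then use the surjectivity of $\varphi$ and its compatibility with external products to reduce to Corollary~\ref{corol-Chow}(ii). The paper simply asserts that $\varphi$ commutes with the external product map, whereas you carefully flag this as the one delicate point and sketch why it holds at the level of structure sheaves modulo higher codimension.
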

\begin{proof}
It is enough to show that $\gr_F^{\bullet}(m)$ is surjective. Since the natural map
$$
\gr^{\bullet}_F K_0(S)\otimes_{\ZZ} \gr^{\bullet}_F K_0(S')\to
\gr^{\bullet}_F (K_0(S)\otimes_{\ZZ}K_0(S'))
$$
is surjective and $\varphi$ from equation~\eqref{eq:varphi} is also surjective and commutes with the external product map, we are reduced to show surjectivity of the external product map for Chow groups. This is implied by Corollary~\ref{corol-Chow}(ii).
\end{proof}

\begin{remark}{\rm
Using Proposition~\ref{prop-K} below, one can show that $m$ from Proposition~\ref{prop-Ksimple} is actually an isomorphism.
}
\end{remark}

\begin{proof}[Proof of Theorem \ref{main}]  Now Theorem \ref{main} follows from Proposition \ref{prop-Ksimple} directly.
Indeed, consider the semiorthogonal decompositions
$$
\mathbf{D}^b(\coh S)=\langle \D, \A \rangle \quad\text{and}\quad \mathbf{D}^b(\coh S')=\langle \D', \A' \rangle,
$$
where subcategories $\D$ and $\D'$ are generated by exceptional collections. It is evident
that the map $m\colon K_0(S)\otimes_{\ZZ} K_0(S')\to K_0(S\times S')$ is compatible with semiorthogonal decomposition.
Hence, the surjectivity of $m$ implies a surjectivity of the map $K_0(\A)\otimes_{\ZZ} K_0(\A')\to K_0(\A\prd\A').$
We know that $K_0(\A)\cong\Pic(S)_{\tors}$ and $K_0(\A')\cong\Pic(S)_{\tors}.$ Since the orders of these two groups are coprime we
obtain that $K_0(\A)\otimes_{\ZZ} K_0(\A')=0.$ Thus $K_0(\A\prd\A')=0$ too.
\end{proof}

\section{$K$\!-motives and universal phantoms}\label{sec:Kmotives}

Let us discuss what we call $K\!$-motives (see \cite{Manin} for more details). The category of \mbox{$K\!$-motives} $\KM(\kk)$ over a field $\kk$ is defined similarly to the category of Chow motives $\CH(\kk)$ with Chow groups being replaced by $K_0.$ Namely, objects in $\KM(\kk)$ are pairs $KM(X,\pi),$ where $\pi\in K_0(X\times X)$ is a projector with respect to the composition of \mbox{$K_0$\!-correspondences}. We stress that the last integer~$n$ from Chow motive triples is not present for $K\!$-motives, which corresponds to the absence of a canonical grading on $K_0.$ Morphisms between $K$\!-motives are defined similarly as for Chow motives. In particular, one has the $K\!$-motive $KM(X):=KM(X,[\O_{\Delta_X}])$ and the unit object $\uno$ is $KM(\Spec(\kk)).$ The explicit structure of $K_0(\PP^1\times\PP^1)$ implies the isomorphism $KM(\PP^1)\cong\uno\oplus\uno$ (the Lefschetz motive does not appear in this setting).
There is a symmetric tensor structure on $\KM(\kk)$ defined by the standard formula
$KM(X, \pi)\otimes KM(Y, \rho)=KM(X\times Y, \pi\boxtimes\rho),$ where $\boxtimes$ is induced
 by the external product of vector bundles (see \cite{Manin}).

Since for a smooth variety $K\!$-groups are modules over $K_0,$ all $K$\!-groups are well-defined for $K\!$-motives by the formula (see \cite{Qu})
\begin{equation}\label{eq:Ki}
K_i(KM(X,\pi)):=\pi(K_i(X))\quad i\geqslant 0,\quad\text{where}\quad  \pi(\alpha)= \mathrm{pr}_{2*}(\mathrm{pr}_1^*(\alpha)\cdot \pi).
\end{equation}

Any element $\alpha\in K_0(X)$ defines morphisms
$$
\alpha_*\colon\uno\to KM(X),\quad \alpha^*\colon KM(X)\to\uno\,,
$$
and, given $\beta\in K_0(X),$ one has the equalities
\begin{equation}\label{eq:composK}
\beta^*\circ \alpha_*=\langle\alpha,\beta\rangle\cdot\id_{\uno}\,,\quad \alpha_*\circ\beta^*=\beta\boxtimes \alpha\,.
\end{equation}
We say that a $K$\!-motive is {\sf of unit type} if it is a direct sum of finitely many copies of $\uno.$

For a commutative ring $R,$ one defines similarly the category of $K\!$-motives with $R$-coefficients $\KM(\kk)_R$ based on $K_0$\!-groups with $R$-coefficients $K_0(X)_R:=K_0(X)\otimes_{\ZZ}R.$ We denote \mbox{$K\!$-motives} with $R$-coefficients by $KM(X,\pi)_R.$ For simplicity, we use the same notation $\uno$ for the unit object in $\KM(\kk)_R.$ All what was said above about the category $\KM(\kk)$ remains valid for the category $\KM(\kk)_{R}.$

The following proposition shows us that if the motive of $X$ is of  Lefschetz type then the Grothendieck groups of $X$ and $X\times X$
can be easily described.

\begin{proposition}\label{prop-Kend}
Let $R$ be a commutative ring of characteristic zero, i.e. $R$ contains $\ZZ.$ Let $X$ be an irreducible smooth projective variety over $\kk$ such that the Chow motive
$M(X)_R\in \CH(\kk)_R$ is of Lefschetz type. Then the following is true:
\begin{itemize}
\item[(i)]
the group $K_0(X)_R$ is a free $R$-module of rank $n,$ where $n$ is the rank of $CH^{\bullet}(X)_R$ over $R$;
\item[(ii)]
we have an isomorphism
$
m_R\colon K_0(X)_R\otimes_R K_0(X)_R\stackrel{\sim}\longrightarrow K_0(X\times X)_R\,
$
defined by \eqref{eq:mult};
\item[(iii)]
the pairing $
\langle\cdot,\cdot\rangle_R\colon K_0(X)_R\otimes_R K_0(X)_R\to R
$
is unimodular;

\item[(iv)]
we have an isomorphism
$
a_R\colon K_0(X\times X)_R\stackrel{\sim}\longrightarrow \End_R(\,K_0(X)_R\,)\,
$
defined by \eqref{eq:act}.
\end{itemize}
\end{proposition}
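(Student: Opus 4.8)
The plan is to reduce everything to the Chow-motivic picture using the Chern character, and then transport the known properties of Chow groups of Lefschetz-type motives (Lemma \ref{lemma-inters}) across to $K_0$. First I would observe that since $M(X)_R$ is of Lefschetz type and $R\supset\ZZ$, Lemma \ref{lemma-inters} gives that $CH^{\bullet}(X)_R$ is a free $R$-module of finite rank $n$ with unimodular intersection pairing. The key bridge is the Chern character $\mathrm{ch}\colon K_0(X)_R\stackrel{\sim}\longrightarrow CH^{\bullet}(X)_R$, which is an isomorphism of $R$-modules because $R$ has characteristic zero (the map $\varphi$ of \eqref{eq:varphi} becomes an isomorphism after $\otimes\QQ$, hence after $\otimes R$, and the associated graded of the codimension filtration is identified with $CH^{\bullet}(X)_R$). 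This immediately yields (i). For (iii), I would use that $\mathrm{ch}$ intertwines the Euler pairing $\langle\cdot,\cdot\rangle_R$ on $K_0$ with the intersection pairing on $CH^{\bullet}(X)_R$ up to the Todd-class twist (Grothendieck--Riemann--Roch): $\langle[E],[F]\rangle=\chi(X,E\otimes F)=\int_X\mathrm{ch}(E)\mathrm{ch}(F)\mathrm{td}(X)$. Multiplication by the invertible element $\mathrm{td}(X)$ is an $R$-linear automorphism of $CH^{\bullet}(X)_R$, so it preserves unimodularity; hence $\langle\cdot,\cdot\rangle_R$ is unimodular, giving (iii).

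Next I would handle (ii). Since $M(X)_R$ is of Lefschetz type, so is $M(X\times X)_R\cong M(X)_R\otimes M(X)_R$ (the category of Lefschetz-type motives is closed under tensor product, and $\Le^p\otimes\Le^q\cong\Le^{p+q}$), so by the same argument $CH^{\bullet}(X\times X)_R$ is free of rank $n^2$ and, via the equivalence $CH^{\bullet}\colon\CH_{\Le}(\kk)_R\stackrel{\sim}\to\GrFr_R$ of Lemma \ref{lemma-inters}, the external product $CH^{\bullet}(X)_R\otimes_R CH^{\bullet}(X)_R\to CH^{\bullet}(X\times X)_R$ is an isomorphism (it corresponds to the canonical iso $V\otimes W$ for graded free modules under that tensor equivalence). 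The Chern character is compatible with external products, so $m_R$ is identified with the external product on Chow groups and is therefore an isomorphism; this is (ii).

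Finally, for (iv), I would factor $a_R$ through the composite
$$
K_0(X\times X)_R\xleftarrow[\ \sim\ ]{m_R}K_0(X)_R\otimes_R K_0(X)_R\longrightarrow \End_R(K_0(X)_R),
$$
where the second map sends $\beta\otimes\alpha$ to $x\mapsto\langle x,\beta\rangle\alpha$ — this is exactly $a_R$ composed with $m_R$, using the projection-formula computation $\mathrm{pr}_{2*}(\mathrm{pr}_1^*(x)\cdot(\beta\boxtimes\alpha))=\langle x,\beta\rangle\alpha$. Since $\langle\cdot,\cdot\rangle_R$ is unimodular by (iii), the canonical map $K_0(X)_R\otimes_R K_0(X)_R\to\End_R(K_0(X)_R)$, $\beta\otimes\alpha\mapsto(x\mapsto\langle x,\beta\rangle\alpha)$, is an isomorphism for a free module with a unimodular pairing (it is the standard identification $V^{\vee}\otimes V\cong\End(V)$ once the pairing is used to identify $V\cong V^{\vee}$). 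Composing with $m_R^{-1}$ from (ii) shows $a_R$ is an isomorphism.

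The main obstacle I expect is bookkeeping the twists: making sure the Todd-class correction in GRR really is an invertible $R$-linear operator that does not disturb unimodularity, and checking that all the identifications ($m_R$ with external product on Chow, $a_R\circ m_R$ with the pairing-adjunction map) are genuinely compatible on the nose rather than just up to sign or up to such twists. Everything else is formal once the Chern-character isomorphism and Lemma \ref{lemma-inters} are in hand.
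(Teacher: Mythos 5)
Your plan has a genuine gap at its foundation: the ``key bridge'' $\mathrm{ch}\colon K_0(X)_R\to CH^{\bullet}(X)_R$ does not exist for a general ring $R$ of characteristic zero. The Chern character and the Todd class have denominators (already $\mathrm{ch}(\O_{\PP^2}(1))=1+H+\tfrac{1}{2}H^2$), so neither $\mathrm{ch}$ nor $\mathrm{td}(X)$ lands in $CH^{\bullet}(X)_R$ unless $R\supset\QQ$. But the whole point of this proposition in the paper is precisely to cover $R=\ZZ[\tfrac{1}{N}]$ (the rational case is already known via Marcolli--Tabuada), so you cannot invoke GRR with the Todd twist. The parenthetical justification you give is also circular: ``$\varphi$ becomes an isomorphism after $\otimes\QQ$, hence after $\otimes R$'' is false in general; a map of $\ZZ$-modules that becomes an isomorphism after $\otimes\QQ$ need not be one after $\otimes\ZZ[\tfrac{1}{N}]$.

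The paper works entirely on the integral side with the map $\varphi_R\colon CH^{\bullet}(X)_R\to\gr^{\bullet}_F K_0(X)_R$ of \eqref{eq:varphi}, which is defined without denominators. To show $\varphi_R$ is an isomorphism one uses that it is always surjective, that it becomes injective after $\otimes\QQ$, and crucially that its \emph{source} $CH^{\bullet}(X)_R$ is a free $R$-module (by the Lefschetz-type hypothesis), so the kernel, being a $\ZZ$-torsion submodule of a torsion-free module, must vanish. That gives (i). For (ii) one checks that $\gr^{\bullet}_F(m_R)$ is an isomorphism by comparing with the external product on Chow groups via $\varphi_R$, then deduces the same for $m_R$. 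For (iii) your idea of ``transport unimodularity'' is on the right track, but without the Chern character you only get that the \emph{associated graded} pairing $\gr\,\langle\cdot,\cdot\rangle_R$ is unimodular; the paper then passes to the unfiltered pairing by choosing a splitting of the filtration and observing that the pairing matrix is block lower-triangular with invertible diagonal blocks (the filtration degrees shift so that only $\gr^p\times\gr^{d-p}$ contributes on the diagonal). Part (iv) then follows from (ii) and (iii) exactly as you describe, since $a\circ m$ is the adjunction map of the pairing; that part of your plan is correct. The takeaway is that the filtered/associated-graded bookkeeping you hoped to avoid is exactly what replaces the Chern character when $R$ is not a $\QQ$-algebra.
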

\begin{proof}
First let us show~(i). Since $M(X)_R$ is of Lefschetz type, $CH^{\bullet}(X)_R$ is a free $R$-module of finite rank. Thus the morphism $\varphi_R$ defined by equation~\eqref{eq:varphi} has a torsion free source. Since $\varphi_R$ is also surjective and becomes an isomorphism over $R\otimes_{\ZZ}\QQ,$ we see that $\varphi_R$ is an isomorphism (here we use that~$R$ has characteristic zero). In particular, $\gr^{\bullet}_FK_0(X)_R$ is a free $R$-module of rank~$n,$ whence $K_0(X)_R$ is also a free $R$-module of rank $n,$ because there are no non-trivial extensions between free modules.

As for~(ii) an explicit description of Chow groups for Lefschetz motives implies that the external product map for Chow groups
$$
CH^{\bullet}(X)_R\otimes_R CH^{\bullet}(X)_R\to CH^{\bullet}(X\times X)_R
$$
is an isomorphism. Since $\varphi_R$ is an isomorphism (for $X\times X$ as well, because the Chow motive $M(X\times X)_R\cong M(X)_R\otimes M(X)_R$ is of Lefschetz type) and commutes with the external product maps, we see that $\gr^{\bullet}_F(m_R)$ is an isomorphism. This implies that $m_R$ is also an isomorphism.

Now let us prove~(iii). By Lemma~\ref{lemma-inters}, the intersection pairing on $CH^{\bullet}(X)_R$ is unimodular. Since $\varphi_R$ is an isomorphism, the pairing $\gr\,\langle\cdot,\cdot\rangle_R$ is also unimodular. Choose any splitting $K_0(X)_R\cong \gr^{\bullet}_F K_0(X)_R$ and a basis $e_p$ in $\gr^p_FK_0(X)_R$ over $R$ for each $0\leqslant p\leqslant d$ (here~$e_p$ denotes the collection of elements in a basis). Consider two bases in $K_0(X)_R$: $(e_0,\ldots,e_d)$ and $(e_d,\ldots,e_0).$ Then the pairing $\langle\cdot,\cdot\rangle_R$ between $K_0(X)_R$ and itself is given in these bases by a block lawer-triangular matrix with invertible diagonal blocks. Therefore the pairing $\langle\cdot,\cdot\rangle_R$ is unimodular.

Finally, to show~(iv) note that the composition $a\circ m$ is induced by the morphism \mbox{$K_0(X)\to K_0(X)^{\vee}$} that corresponds to the pairing $\langle\cdot,\cdot\rangle$ (this is true for any~$X$). Since $\langle\cdot,\cdot\rangle_R$ on $K_0(X)_R$ is unimodular, we see that $a_R\circ m_R$ is an isomorphism. As $m_R$ is an isomorphism, we conclude that $a_R$ is an isomorphism as well.
\end{proof}

The following proposition is implied by Theorem~1.7 in~\cite{MT} for the case when $R$ contains~$\QQ.$ It is important for our main result, Theorem~\ref{main2}, that the statement is still true over a more general ring~$R.$ Note that the proof in this more general case requires new arguments, namely, Proposition~\ref{prop-Kend}.

\begin{proposition}\label{prop-K}
Assume that $R$ has characteristic zero. Let $X$ be an irreducible smooth projective variety such that the Chow motive $M(X)_R$ is of Lefschetz type. Then the $K\!$-motive $KM(X)_R$ is of unit type, namely, $KM(X)_R\cong\uno^{\oplus n},$ where $n$ is the rank of $CH^{\bullet}(X)_R$ over~$R.$
\end{proposition}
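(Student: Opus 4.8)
The plan is to build an explicit isomorphism $KM(X)_R\cong\uno^{\oplus n}$ out of the data provided by Proposition~\ref{prop-Kend}, using the collection of morphisms $\alpha_*\colon\uno\to KM(X)_R$ and $\alpha^*\colon KM(X)_R\to\uno$ attached to elements $\alpha\in K_0(X)_R$ together with the composition formulas~\eqref{eq:composK}. First I would invoke Proposition~\ref{prop-Kend}(i),(iii): the module $K_0(X)_R$ is free of rank $n,$ and the pairing $\langle\cdot,\cdot\rangle_R$ on it is unimodular. Choosing a basis $e_1,\dots,e_n$ of $K_0(X)_R$ and the dual basis $f_1,\dots,f_n$ with respect to this pairing (which exists precisely because the pairing is unimodular), one has $\langle e_i,f_j\rangle_R=\delta_{ij}.$

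Next I would assemble the morphisms. Set $\beta:=\sum_i (e_i)_*\circ (f_i)^*\colon KM(X)_R\to KM(X)_R$ and, more to the point, consider the two morphisms
$$
u:=\bigl((f_1)^*,\dots,(f_n)^*\bigr)\colon KM(X)_R\to\uno^{\oplus n},\qquad
v:=\bigl((e_1)_*,\dots,(e_n)_*\bigr)\colon\uno^{\oplus n}\to KM(X)_R.
$$
By the first formula in~\eqref{eq:composK}, the composite $u\circ v\colon\uno^{\oplus n}\to\uno^{\oplus n}$ has $(i,j)$-entry $(f_i)^*\circ(e_j)_*=\langle e_j,f_i\rangle_R\cdot\id_{\uno}=\delta_{ij}\id_{\uno},$ so $u\circ v=\id_{\uno^{\oplus n}}.$ Thus $v\circ u$ is an idempotent endomorphism of $KM(X)_R$; call it $q.$ By the second formula in~\eqref{eq:composK}, $q=\sum_i (e_i)_*\circ(f_i)^*$ corresponds under the identifications to the element $\sum_i f_i\boxtimes e_i\in K_0(X\times X)_R.$ The crux is to show $q=\id_{KM(X)_R}$, equivalently that $\sum_i f_i\boxtimes e_i=[\O_{\Delta_X}]$ in $K_0(X\times X)_R$; once that is known, $u$ and $v$ are mutually inverse isomorphisms and we are done.

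To prove this identity I would feed it through the isomorphism $a_R\colon K_0(X\times X)_R\xrightarrow{\sim}\End_R(K_0(X)_R)$ of Proposition~\ref{prop-Kend}(iv); since $a_R$ is injective it suffices to check that $a_R$ sends $\sum_i f_i\boxtimes e_i$ and $[\O_{\Delta_X}]$ to the same endomorphism of $K_0(X)_R.$ The class $[\O_{\Delta_X}]$ maps to $\id_{K_0(X)_R}$ essentially by definition of $a$ (the diagonal correspondence acts as the identity). For $\sum_i f_i\boxtimes e_i,$ unwinding the definition~\eqref{eq:act} and the projection formula gives $\alpha\mapsto\sum_i \pr_{2*}\bigl(\pr_1^*(\alpha)\cdot(f_i\boxtimes e_i)\bigr)=\sum_i\chi(X,\alpha\otimes f_i)\,e_i=\sum_i\langle\alpha,f_i\rangle_R\,e_i,$ which is exactly the expansion of $\alpha$ in the basis $(e_i)$ using the dual basis $(f_i)$ — i.e. the identity map. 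Hence the two endomorphisms coincide, $q=\id_{KM(X)_R},$ and $KM(X)_R\cong\uno^{\oplus n}.$ The main obstacle is really packaged into Proposition~\ref{prop-Kend}: the unimodularity of the $K_0$-pairing over the non-$\QQ$-algebra $R$ and the bijectivity of $a_R$ are what make the dual basis exist and make the identity verifiable componentwise; granting those, the argument here is a formal manipulation of motivic correspondences via~\eqref{eq:composK}.
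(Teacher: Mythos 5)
Your proof is correct and follows essentially the same route as the paper: take a basis of $K_0(X)_R$ together with its dual with respect to the unimodular pairing, form the element $\sum_i f_i\boxtimes e_i\in K_0(X\times X)_R$, use equation~\eqref{eq:composK} to see that it is a projector whose image is $\uno^{\oplus n}$, and then use the injectivity of $a_R$ from Proposition~\ref{prop-Kend}(iv) to identify this element with $[\O_{\Delta_X}]$. The paper writes the same projector (up to swapping which basis is called ``dual'') directly as $\pi=\sum_i\alpha_i\boxtimes\beta_i$ and evaluates $a(\pi)$ on the basis $\{\alpha_i\}$, whereas you package the retraction as a pair of morphisms $u,v$ with $u\circ v=\id$; these are only cosmetic differences.
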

\begin{proof}
We use all results from Proposition~\ref{prop-Kend}. Let $\{\alpha_i\},$ \mbox{$1\leqslant i\leqslant n$}, be any basis in $K_0(X)_R$ over $R.$ Let $\{\beta_i\},$ $1\leqslant i\leqslant n,$ be the dual basis in $K_0(X)_R$ over $R$ with respect to the unimodular pairing $\langle\cdot,\cdot\rangle_R.$ Put $\pi:=\sum\limits_{i=1}^n\alpha_i\boxtimes\beta_i.$ By equation~\eqref{eq:composK}, $\pi$ is a projector and there is an isomorphism of \mbox{$K$\!-motives} $KM(X,\pi)_R\cong\uno^{\oplus n}.$

Using formula~\eqref{eq:composK} and evaluating $a(\pi)$ at each $\alpha_i,$ we see that $\pi$ acts identically on $K_0(X)_R.$ Since $a_R$ is an isomorphism, we conclude that $\pi=[\O_{\Delta_X}]$ in $K_0(X\times X)_R.$ Consequently, $KM(X,\pi)_R\cong KM(X)_R,$ which finishes the proof.
\end{proof}

The next statement follows immediately from Proposition~\ref{prop-K}.

\begin{corollary}\label{corol-Ksummand}
Assume that the ring $R$ has characteristic zero. Let $X$ be an irreducible smooth projective variety such that the Chow motive $M(X)_R$ is of Lefschetz type. Let $P$ be a direct summand in the $K$\!-motive $KM(X)_R.$ Then $P=0$ in $\KM(\kk)_R$ if and only if $K_0(P)_R=0.$
\end{corollary}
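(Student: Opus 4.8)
The plan is to reduce the statement to Proposition~\ref{prop-K} together with the fact that in $\KM(\kk)_R$ an idempotent-complete setting allows one to detect vanishing of a summand by its $K_0$. One direction is trivial: if $P=0$ then all its invariants vanish, in particular $K_0(P)_R=0$. So the content is the converse, and here is how I would carry it out. Write $KM(X)_R\cong P\oplus P'$ coming from a pair of orthogonal projectors $\rho,\,\id-\rho\in\End_{\KM(\kk)_R}(KM(X)_R)$, with $P=KM(X,\rho')_R$ for the corresponding projector $\rho'\in K_0(X\times X)_R$. By Proposition~\ref{prop-K}, $KM(X)_R\cong\uno^{\oplus n}$, hence $\End_{\KM(\kk)_R}(KM(X)_R)\cong\mathrm{Mat}_n(R)$, and under the identification $K_i(KM(X)_R)\cong R^{\oplus n}$ the action of an endomorphism is just the corresponding matrix acting on $R^{\oplus n}$ (this is exactly what formula~\eqref{eq:Ki} says once $KM(X)_R$ is of unit type). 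In particular $K_0(P)_R=\rho'(K_0(X)_R)$ is the image of the idempotent matrix $\rho'$ acting on $R^{\oplus n}$.

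The key step is then linear algebra over the (possibly non-reduced, but characteristic zero) commutative ring $R$: an idempotent $n\times n$ matrix $e$ over $R$ with $e\cdot R^{\oplus n}=0$ must be the zero matrix. Indeed $e = e\cdot\id_{R^{\oplus n}}$ sends every standard basis vector into $e\cdot R^{\oplus n}=0$, so $e=0$ as a matrix; equivalently, for an idempotent $e$ one has $\Im(e)=0\iff e=0$ because $e$ acts as the identity on its own image. Hence $\rho'=0$ in $\End_{\KM(\kk)_R}(KM(X)_R)\cong\mathrm{Mat}_n(R)$, which gives $P=KM(X,\rho')_R\cong 0$ in $\KM(\kk)_R$ (a motive cut out by the zero projector is the zero object). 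This completes the argument.

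I do not expect a serious obstacle here: the real work has already been front-loaded into Proposition~\ref{prop-K} (and, through it, Proposition~\ref{prop-Kend}, especially the unimodularity of $\langle\cdot,\cdot\rangle_R$). The only point that needs a word of care is making sure the isomorphism $K_0(KM(X)_R)\cong K_0(X)_R$ intertwines the endomorphism-action of $\KM(\kk)_R$ with the matrix action — i.e. that composition of $K_0$-correspondences matches composition of endomorphisms on $K_0$ — but this is precisely the defining compatibility of the $K$-motive formalism recorded before Proposition~\ref{prop-Kend} (and the same compatibility already used implicitly in the proof of Proposition~\ref{prop-K} via the map $a_R$). So the whole corollary is essentially: unit-type $\Rightarrow$ endomorphism ring is a matrix ring over $R$ $\Rightarrow$ idempotents are detected by their images on $R^{\oplus n}$, hence by $K_0$.
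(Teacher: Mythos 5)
Your proof is correct and matches the approach the paper intends: the paper simply declares the corollary to ``follow immediately from Proposition~\ref{prop-K},'' and you spell out precisely that implication via the identification $\End(KM(X)_R)\cong\mathrm{Mat}_n(R)$ acting on $K_0(X)_R\cong R^{\oplus n}$, with the idempotent cutting out $P$ detected by its image. As a minor simplification, the final linear-algebra step does not actually need idempotency: any $n\times n$ matrix over $R$ whose image in $R^{\oplus n}$ vanishes has all columns zero and is therefore the zero matrix.
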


Let $X$ be an arbitrary irreducible smooth projective variety over $\kk.$

To any admissible subcategory $\N\subset\mathbf{D}^b(\coh X)$ we can attach a $K$\!-motive
$KM(\N).$ It is a direct summand of the $K$\!-motive $KM(X).$ Let us explain this in more details.

Denote by $j$ the  functor of inclusion
 of $\N$ to $\mathbf{D}^b(\coh X).$ A splitting of $KM(\N)$ as a direct summand of $KM(X)$ depends on a projection
functor $p\colon \mathbf{D}^b(\coh X)\to\N,$ where the projection~$p$ satisfies $p \circ j\cong\id_{\N}.$
For example, as a projection $p$ we can take a right adjoint functor to the inclusion functor $j.$
The composition $\Phi=j\cdot p$ is a projector functor from $\mathbf{D}^b(\coh X)$ to itself, i.e. $\Phi\circ\Phi\cong\Phi,$
and the image of this functor is exactly the subcategory $\N\subset\mathbf{D}^b(\coh X).$

An enhancement of the category $\mathbf{D}^b(\coh X)$ induces an enhancement of the subcategory $\N$ and the inclusion functor
$j$ is a DG functor between enhancements. As the right adjoint $p$ is also a quasifunctor, i.e. a DG functor up to a quasi-equivalence.
By  To\"en's theorem \cite[Th. 8.15]{To} any  quasifunctor can be represented by an object on the product.
This means that there is an object $\E\in\mathbf{D}^b(\coh(X\times X))$  that is represented the functor $\Phi=j\cdot p,$ i.e. $\Phi\cong\Phi_{\E},$ where
\begin{equation}\label{eq:kernelfunctor}
\Phi_{\E}(-):=\pr_{2*}(\pr_1^*(-)\stackrel{\bf{L}}{\otimes}\E).
\end{equation}
The existence of $\E$ is also shown explicitly in~\cite[Th.~3.7]{Ku1}.
Consider the class $[\E]$ of the object $\E\in\mathbf{D}^b(\coh(X\times X))$
in $K_0(X\times X).$ Since the functor $\Phi_{\E}$ is a projector, the element $[\E]\in K_0(X\times X)$ is a projector
with respect to the composition of $K_0$\!-correspondences and we obtain a $K$\!-motive $KM(X, [\E]).$
It can be easily checked that for different choices of a projection $p$ we obtain isomorphic $K$\!-motives.
Denote it by $KM(\N).$

It is also evident that the $K$\!-groups $K_i(KM(X, [\E]))$ defined for the $K$\!-motive \mbox{$KM(X, [\E])=KM(\N)$} in (\ref{eq:Ki}) coincide with
the $K$\!-groups $K_i(\N)$ because they split off from the groups $K_i(X)$ by the same operator $a([\E])$ acting on $K_i(X)$
by the rule
$
a([E])(-)=\pr_{2*}(\pr_1^*(-)\cdot[\E]).
$

With any two objects $\E\in\mathbf{D}^b(\coh(X\times X))$ and $\F\in\mathbf{D}^b(\coh(Y\times Y))$ we can associate
an object $\E\prd\F\in\mathbf{D}^b(\coh((X\times Y)\times (X\times Y)))$ that is a tensor product of pull backs of
$\E$ and $\F$ on $(X\times Y)\times (X\times Y).$
Assume that $\Phi_{\E}$ and $\Phi_{\F}$ are projectors onto admissible subcategories \mbox{$\N\subset\mathbf{D}^b(\coh X)$} and
$\M\subset\mathbf{D}^b(\coh Y),$ respectively. Then
the functor $\Phi_{\E\prd\F}$ from \mbox{$\mathbf{D}^b(\coh(X\times Y))$}
to itself is also a projector and it projects $\mathbf{D}^b(\coh(X\times Y))$ onto the admissible subcategory \mbox{$\N\prd\M\subset \mathbf{D}^b(\coh(X\times Y))$}.
This implies that
\begin{equation}\label{eq:tens}
KM(\N\prd\M)\cong KM(X\times Y, [\E\prd\F])\cong KM(X, [\E])\otimes KM(Y, [\F])\cong KM(\N)\otimes KM(\M).
\end{equation}

Further, Hochschild homology are well-defined for $K$\!-motives. Let $f\in K_0(X\times Y)$ be a $K_0$\!-correspondence. Let also $\E$ be an object in ${\mathbf D}^b(X\times Y)$ whose class in $K_0(X\times Y)$ equals~$f$. Then a DG-functor $\Phi_{\E}$ defined as in formula~\eqref{eq:kernelfunctor} induces a map $\phi_{\E}\colon HH_i(X)\to HH_i(Y)$. Moreover, $\phi_{\E}$ depends only on the class of $\E$ in $K_0(X\times Y)$, that is, we have a well-defined map $f\colon HH_i(X)\to HH_i(Y)$ (see, for example, \cite{Ke1} page 7, \cite{Ke2} Th.5.2 a), or \cite{Ku1} method of  the proof of Th.7.3). If $\kk$ has characteristic zero, this can be also shown by an explicit formula
$$
 f(\alpha):=\mathrm{pr}_{2*}(\mathrm{pr}_1^*(\alpha)\cdot
\mathrm{pr}_1^*\sqrt{\mathrm{td}_X}\cdot\mathrm{ch}(f)\cdot \mathrm{pr}_2^*\sqrt{\mathrm
{td}_Y}),\quad \alpha\in \mathrm{HH}_i(X)\,,
$$
where $\mathrm{ch}\colon K_0(X\times X)\to \mathrm{HH}_0(X\times X)$ is the Chern character, while $\mathrm{td}_X$  is the Todd class of~$X$ and the analogous for $\mathrm{td}_Y$ (see, for example, \cite{Or}).
Thus Hochschild homology are well-defined for $K$\!-motives by the
formula
$$
\mathrm{HH}_i(KM(X,\pi)):=\pi(\mathrm{HH}_i(X))\,\quad i\geqslant 0\,.
$$

Given an admissible subcategory $\N\subset\mathbf{D}^b(\coh X)$ one checks that there is a canonical isomorphism
$$
\mathrm{HH}_i(KM(\N))\cong \mathrm{HH}_i(\N)\,,
$$
where $KM(\N)$ is the $K$\!-motive associated with $\N$ as above.

Now it can be shown that the property for an admissible subcategory to be a universal phantom is equivalent to the vanishing of its $K$\!-motive.

\begin{proposition}\label{prop:KM-univ}
Let $X$ be a smooth projective variety and let $\N\subset\mathbf{D}^b(\coh X)$ be an admissible subcategory.
Then the following statements are equivalent:
\begin{itemize}
\item[(i)] the subcategory $\N$ is a universal phantom;
\item[(ii)] $K_0(\N\prd\mathbf{D}^b(\coh X))=0;$
\item[(iii)] $KM(\N)=0$ in the category of $K$\!-motives $\KM(\kk).$
\end{itemize}
\end{proposition}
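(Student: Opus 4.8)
The plan is to prove the cycle of implications $(\mathrm{i})\Rightarrow(\mathrm{ii})\Rightarrow(\mathrm{iii})\Rightarrow(\mathrm{i})$. Throughout I will use three facts established above: the isomorphism $KM(\N\prd\M)\cong KM(\N)\otimes KM(\M)$ of~\eqref{eq:tens}, the identifications $K_i(KM(\N))\cong K_i(\N)$ and $\mathrm{HH}_*(KM(\N))\cong\mathrm{HH}_*(\N)$, and the description of $KM(\N)$ as the direct summand $KM(X,[\E])$ of $KM(X)$ cut out by a projector $[\E]\in K_0(X\times X)=\End_{\KM(\kk)}(KM(X))$. The implication $(\mathrm{i})\Rightarrow(\mathrm{ii})$ is immediate: the definition of a universal phantom applied with $Y=X$ says exactly that $\N\prd\mathbf{D}^b(\coh X)$ is a phantom, in particular $K_0(\N\prd\mathbf{D}^b(\coh X))=0$. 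For $(\mathrm{iii})\Rightarrow(\mathrm{i})$, assume $KM(\N)=0$. Then for any smooth projective $Y$ one has $KM(\N\prd\mathbf{D}^b(\coh Y))\cong KM(\N)\otimes KM(Y)=0$, so $K_0(\N\prd\mathbf{D}^b(\coh Y))=0$ and $\mathrm{HH}_*(\N\prd\mathbf{D}^b(\coh Y))=0$; since $\N\prd\mathbf{D}^b(\coh Y)$ is admissible, it is a phantom. As $KM(\N)=0$ also forces $K_0(\N)=\mathrm{HH}_*(\N)=0$, the category $\N$ itself is a phantom, and hence a universal one.

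The heart of the argument is $(\mathrm{ii})\Rightarrow(\mathrm{iii})$, and this is where I expect the only non-formal work to lie. First I would restate the hypothesis motivically: $K_0\bigl(KM(\N)\otimes KM(X)\bigr)\cong K_0(\N\prd\mathbf{D}^b(\coh X))=0$. The crucial input is that $\KM(\kk)$ is a rigid symmetric monoidal category in which the $K$\!-motive $KM(X)$ of a smooth projective variety is self-dual; since there is no Lefschetz twist here, the duality formula takes the form $KM(X,\pi)^{\vee}\cong KM(X,\pi^{\mathrm t})$, with evaluation and coevaluation induced by $[\O_{\Delta_X}]^{*}$ and $[\O_{\Delta_X}]_{*}$ and the triangle identities following from $[\O_{\Delta_X}]\circ[\O_{\Delta_X}]=[\O_{\Delta_X}]$, just as for Chow motives. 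Consequently the dual $KM(\N)^{\vee}\cong KM(X,[\E]^{\mathrm t})$ is again a direct summand of $KM(X)$, because $[\E]^{\mathrm t}$ is again a projector. Tensoring the split inclusion $KM(\N)^{\vee}\hookrightarrow KM(X)$ by $KM(\N)$ exhibits $KM(\N)\otimes KM(\N)^{\vee}$ as a direct summand of $KM(\N)\otimes KM(X)$, so the additive functor $K_0$ yields $K_0\bigl(KM(\N)\otimes KM(\N)^{\vee}\bigr)=0$. Now for any $K$\!-motive $P$ one has $K_0(P)=\Hom_{\KM(\kk)}(\uno,P)$ (this is the content of the morphisms $\alpha\mapsto\alpha_*$ together with~\eqref{eq:Ki}), and rigidity identifies $\Hom_{\KM(\kk)}\bigl(\uno,KM(\N)\otimes KM(\N)^{\vee}\bigr)$ with $\End_{\KM(\kk)}\bigl(KM(\N)\bigr)$. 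Hence $\End_{\KM(\kk)}(KM(\N))=0$, so $\id_{KM(\N)}=0$, and therefore $KM(\N)=0$.

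The one point that genuinely needs care is the self-duality of $KM(X)$ used above — equivalently, the identification $K_0\bigl(KM(\N)\otimes KM(\N)^{\vee}\bigr)\cong\End_{\KM(\kk)}(KM(\N))$. A clean way to organize it is to first observe that $\KM(\kk)$ is rigid because every object is a retract of some $KM(X)$ and $KM(X)$ is dualizable via the diagonal correspondence, and then read off the explicit form $KM(X,\pi)^{\vee}\cong KM(X,\pi^{\mathrm t})$ from the computation of evaluation and coevaluation indicated above. Once this is in place, all remaining steps are routine bookkeeping with direct summands and with the additive invariants $K_0$ and $\mathrm{HH}_*$.
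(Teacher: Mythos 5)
Your proof is correct and follows essentially the same strategy as the paper's: all three implications ultimately rest on the facts that $KM(\N)$ is a direct summand of $KM(X)$, that $K_0$ and $\mathrm{HH}_*$ of an admissible subcategory are computed from its $K$\!-motive, and that the boxtimes operation corresponds to the tensor product of $K$\!-motives. The only difference is cosmetic: for (ii)$\Rightarrow$(iii) the paper directly identifies $K_0(\N\prd\mathbf{D}^b(\coh X))$ with $\Hom_{\KM(\kk)}(KM(X),KM(\N))$ and observes that the retraction $KM(X)\to KM(\N)$ must then vanish, whereas you reach the same vanishing of $\id_{KM(\N)}$ by a slightly longer route through the self-duality $KM(X)^{\vee}\cong KM(X)$, the identification $K_0(P)\cong\Hom(\uno,P)$, and the computation of $\End(KM(\N))$; this is a valid and somewhat more explicit unpacking of the rigidity that the paper's one-line identification leaves implicit, but it is not a different argument.
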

\begin{proof} (i)$\Rightarrow $(ii) This is evident.

(ii)$\Rightarrow $(iii) Since $K_0(\N\prd\mathbf{D}^b(\coh X))=0$
we have
$\Hom_{\KM(\kk)}(KM(X), KM(\N))=0.$
On the other hand, we know that $KM(\N)$ is a direct summand  of $KM(X).$ This immediately implies that $KM(\N)=0.$

(iii)$\Rightarrow$(i) Suppose that $KM(\N)=0$ in the category of $K$\!-motives. Therefore, for any $Y$ we have that
$$
K_0(\N\prd\mathbf{D}^b(\coh Y))=\Hom_{\KM(\kk)}(KM(Y), KM(\N))=0.
$$

Since Hochschild homology are well-defined for $K$\!-motives we obtain that $\N$ is a universal phantom.
\end{proof}

\begin{theorem}\label{th-fin}
Let $X$ and $X'$ be two smooth projective varieties and let $N$ and $N'$ be two coprime integers such that
the motives $M(X)_{1/N}\in\CH(\kk)_{1/N}$ and $M(X')_{1/N'}\in\CH(\kk)_{1/N'}$ are of Lefschetz type.
Let $\A\subset\mathbf{D}^b(\coh X)$ and $\A'\subset\mathbf{D}^b(\coh X')$ be two quasiphantoms such that
$N\cdot K_0(\A)=0$ and $N'\cdot K_0(\A')=0.$ Then the category $\A\prd\A'$ is a universal phantom and
 $KM(\A\prd\A')=0.$
\end{theorem}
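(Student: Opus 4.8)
The plan is to reduce the theorem to the single assertion $KM(\A)\otimes KM(\A')=0$ in $\KM(\kk)$, and then let Proposition~\ref{prop:KM-univ} supply the rest. First note that $\A\prd\A'\subset\mathbf{D}^b(\coh(X\times X'))$ is admissible: since $\A$ and $\A'$ are admissible, $\A\prd\A'$ occurs as a component of the semiorthogonal decomposition of $\mathbf{D}^b(\coh(X\times X'))$ produced, as recalled in Section~1, from semiorthogonal decompositions of $\mathbf{D}^b(\coh X)$ and $\mathbf{D}^b(\coh X')$ having $\A$ and $\A'$ as components. Hence $KM(\A\prd\A')$ is defined, and by~\eqref{eq:tens} it is isomorphic to $KM(\A)\otimes KM(\A')$. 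Granting that this vanishes, Proposition~\ref{prop:KM-univ}, applied with ambient variety $X\times X'$, gives at once that $\A\prd\A'$ is a universal phantom; moreover $K_i(\A\prd\A')\cong K_i(KM(\A\prd\A'))=0$ and $\mathrm{HH}_i(\A\prd\A')\cong\mathrm{HH}_i(KM(\A\prd\A'))=0$ for all $i\geq0$, which is the remaining content of the statement.

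Next I would show that $KM(\A)$ is killed by a power of $N$, i.e.\ $N^{a}\cdot\id_{KM(\A)}=0$ in $\End_{\KM(\kk)}(KM(\A))$ for some $a\geq0$. By construction $KM(\A)$ is a direct summand of $KM(X)$, so $KM(\A)_{1/N}$ is a direct summand of $KM(X)_{1/N}$ in $\KM(\kk)_{1/N}$. Since $M(X)_{1/N}$ is of Lefschetz type, Proposition~\ref{prop-K}, applied over the characteristic-zero ring $\ZZ[\frac{1}{N}]$, says $KM(X)_{1/N}$ is of unit type; hence Corollary~\ref{corol-Ksummand} applies to its summand $KM(\A)_{1/N}$ and shows that it vanishes if and only if $K_0(KM(\A))\otimes\ZZ[\frac{1}{N}]=K_0(\A)\otimes\ZZ[\frac{1}{N}]$ vanishes. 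The latter holds because $N\cdot K_0(\A)=0$. Therefore $KM(\A)_{1/N}=0$ in $\KM(\kk)_{1/N}$; unwinding the definition of the $\ZZ[\frac{1}{N}]$-linear category $\KM(\kk)_{1/N}$ --- its Hom groups are those of $\KM(\kk)$ tensored with $\ZZ[\frac{1}{N}]$ --- this says precisely that $\id_{KM(\A)}$ dies in $\End_{\KM(\kk)}(KM(\A))\otimes\ZZ[\frac{1}{N}]$, i.e.\ $N^{a}\id_{KM(\A)}=0$ for some $a$. The same argument with $X'$, $N'$ in place of $X$, $N$ yields $(N')^{a'}\id_{KM(\A')}=0$ for some $a'$.

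Finally I would combine the two. In the additive symmetric monoidal category $\KM(\kk)$ one has $\id_{KM(\A)\otimes KM(\A')}=\id_{KM(\A)}\otimes\id_{KM(\A')}$, so this morphism is annihilated both by $N^{a}$ and by $(N')^{a'}$. As $N$ and $N'$ are coprime, so are $N^{a}$ and $(N')^{a'}$, hence $uN^{a}+v(N')^{a'}=1$ for suitable $u,v\in\ZZ$, and therefore $\id_{KM(\A)\otimes KM(\A')}=(uN^{a}+v(N')^{a'})\,\id_{KM(\A)\otimes KM(\A')}=0$; thus $KM(\A\prd\A')\cong KM(\A)\otimes KM(\A')=0$. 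Together with the first paragraph this completes the argument.

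The real content is already packaged in Proposition~\ref{prop-K} (that a Lefschetz-type Chow motive has a unit-type $K$-motive over a characteristic-zero ring); the only delicate points are formal --- verifying that $\A\prd\A'$ is admissible so that Proposition~\ref{prop:KM-univ} applies and ``universal phantom'' is meaningful, and translating ``the object $KM(\A)_{1/N}$ is zero in $\KM(\kk)_{1/N}$'' into ``$\id_{KM(\A)}$ is $N$-power torsion in $\KM(\kk)$''. I do not anticipate any serious obstacle beyond this bookkeeping.
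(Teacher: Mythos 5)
Your proposal is correct and follows essentially the same route as the paper: apply Corollary~\ref{corol-Ksummand} over $\ZZ[\frac{1}{N}]$ and $\ZZ[\frac{1}{N'}]$ to get $KM(\A)_{1/N}=0$ and $KM(\A')_{1/N'}=0$, pass to the tensor product via~\eqref{eq:tens}, and use coprimality of $N,N'$ to conclude $KM(\A\prd\A')=0$, then invoke Proposition~\ref{prop:KM-univ}. The only difference is expository --- you unpack the coprimality step explicitly via $N$-power torsion of the identity morphism, whereas the paper states it in one line; this is a genuine small gap in the paper's write-up that you correctly fill.
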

\begin{proof}
By Corollary \ref{corol-Ksummand} we have $KM(\A)_{1/N}=0$ and $KM(\A')_{1/N'}=0.$ The \mbox{$K$\!-motive} \mbox{$KM(\A\prd\A')$} is the tensor product of $K$\!-motives
$KM(\A)$ and $KM(\A').$ Therefore \mbox{$KM(\A\prd\A')$} becomes trivial after tensoring with both $\ZZ[1/N]$ and $\ZZ[1/N'].$
Since $N$ and~$N'$ are coprime the $K$\!-motive $KM(\A\prd\A')$ is trivial by itself.
By Proposition \ref{prop:KM-univ} this implies that $\A\prd\A'$ is a universal phantom.
\end{proof}

Now this theorem with Proposition \ref{prop-Chowdecomp} imply our main Theorem \ref{main2} directly.

\begin{proof}[Proof of Theorem \ref{main2}] Denote by $N$ and $N'$ the orders of the groups $K_0(\A)\cong\Pic(S)_{\tors}$
and $K_0(\A')\cong\Pic(S')_{\tors},$ respectively. By assumption, $N$ and $N'$ are coprime.
By Proposition \ref{prop-Chowdecomp} motives $M(S)_{1/N}$ and $M(S')_{1/N'}$ are of Lefschetz type. Thus by Theorem \ref{th-fin}
$\A\prd\A'$ is a universal phantom and $KM(\A\prd\A')=0.$ Since $K$\!-groups of $\A\prd\A'$ coincide with $K$\!-groups of
the $K$\!-motive $KM(\A\prd\A')$ we get a vanishing $K_i(\A\prd\A')=0$ for all $i.$
\end{proof}

\begin{remark}{\rm
It was proved in the paper
\cite{Ta} that any additive invariant on the category of all small DG categories localized with respect to Morita equivalences
factors through a category of noncommutative $K$\!-motives for DG categories, which contains the category
$\KM(\kk)$ as a full subcategory (see also \cite{Ke2}). This means that any natural additive functor
is trivial on a universal phantom category.
}
\end{remark}

\section{$K$\!-motives and Hochschild homology}\label{sec:add}

In our main construction, we have establish the vanishing of Hochschild homology by an explicit use of K\"unneth formula (see~Remark~\ref{rem:Hochsch}). However, over $\CC$ the vanishing of Hochschild homology for admissible subcategories is a consequence of the vanishing of $K_0$\!-groups with rational coefficients as we show in Theorem~\ref{thm-trivial}.

First we give a general result about $K$\!-motives with rational coefficients. The proof is a direct analogue of Lemma~1 from~\cite{GG}, where one considers Chow motives instead of $K\!$-motives.

\begin{proposition}\label{prop-add}
Let $N$ be a rational $K\!$-motive over $\CC,$ i.e., an object in $\KM(\CC)_{\QQ},$ such that $K_0(N)_{\QQ}=0.$ Then $N=0.$
\end{proposition}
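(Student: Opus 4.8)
The plan is to imitate the proof of Lemma~1 from~\cite{GG}, adapted to $K\!$-motives, and the key input is that over $\CC$ (an algebraically closed field of infinite transcendence degree over its prime field $\QQ$) any rational $K\!$-motive with vanishing $K_0$ is trivial. So let $N=KM(X,\pi)_{\QQ}$ for some smooth projective $X/\CC$ and a rational projector $\pi\in K_0(X\times X)_{\QQ}$, with $K_0(N)_{\QQ}=\pi\bigl(K_0(X)_{\QQ}\bigr)=0.$ We want to conclude $N=0$, i.e. $\pi=0$ in $K_0(X\times X)_{\QQ}.$

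First I would pass to a minimal field of definition. Since $\pi$ involves only finitely many data, there is a finitely generated subfield $k_0\subset\CC$ over which $X$ and $\pi$ are defined; enlarging $k_0$ if necessary, we may assume $k_0$ is algebraically closed inside $\CC$ is not needed, but we do use that $\CC$ has infinite transcendence degree over $k_0$, hence $\CC\cong k_0(\text{huge purely transcendental extension})$ up to the algebraic closure — concretely, $\CC$ contains a copy of the algebraic closure $\overline{k_0(t_i:i\in I)}$ for a large index set $I.$ The point of the \cite{GG} argument is that a very general point of any variety over $k_0$ becomes a $\CC$-point; since $K_0$ of a motive is built from cycle/vector-bundle classes, vanishing of $K_0$ forces vanishing of the motive after base change to a big enough field. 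Concretely, the composite $K_0$-correspondence giving $\pi\circ(-)$ can be tested on all closed points, and using that $\varphi_{\QQ}\colon CH^{\bullet}\to\gr_F K_0$ is an isomorphism rationally (recalled in Section~\ref{sec:K}), one reduces to the analogous statement for rational Chow motives, which is exactly Lemma~1 of \cite{GG} applied with $\Omega=\CC$ and $k=k_0$.

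In more detail, the steps I would carry out are: (1) replace $K_0$ by $\gr_F^{\bullet}K_0$, which is legitimate rationally since there are no rational extensions between the graded pieces and $m_{\QQ},a_{\QQ}$ behave well — actually simpler: use $\varphi_{\QQ}$ to identify the rational $K\!$-motive $N$ with an honest rational Chow motive $M(X,\pi',\ast)_{\QQ}$, because the Chern character gives an equivalence between $\KM(\CC)_{\QQ}$ and a full subcategory of $\CH(\CC)_{\QQ}$ (all twists collapsed) — hence $K_0(N)_{\QQ}=0$ translates into all Chow groups $CH^p$ of the associated rational Chow motive being zero; (2) invoke \cite[Lemma~1]{GG}: a rational Chow motive over $\CC$ with all Chow groups vanishing is the zero motive, applied with base field a minimal field of definition $k_0$ and $\Omega=\CC$; (3) conclude the associated rational Chow motive is $0$, hence $N=0$ in $\KM(\CC)_{\QQ}.$

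The main obstacle is step~(1): making precise the passage between rational $K\!$-motives and rational Chow motives. One must check that the Chern character / the isomorphism $\varphi_{\QQ}$ really induces a faithful (indeed fully faithful onto its image) functor on motives compatible with correspondences and composition, so that "$K_0(N)_{\QQ}=0$" genuinely forces the Chow-motivic image of $N$ to have all Chow groups zero (not just the degree-matching one). Equivalently, one checks that the projector $\pi\in K_0(X\times X)_{\QQ}$ corresponds under $\mathrm{ch}$ to a sum of Chow projectors in each codimension and that vanishing of $\pi$ acting on $K_0(Y)_{\QQ}$ for all $Y$ (which follows from $K_0(N)_{\QQ}=0$ by Yoneda, since $\mathrm{Hom}_{\KM(\CC)_{\QQ}}(KM(Y),N)=\pi(K_0(X\times Y)_{\QQ})=$ the image of $\pi$ acting on $K_0$ of a product, and one must see this is controlled by $K_0(N)_{\QQ}$ via the same kind of Künneth/$a$-map reasoning as in Proposition~\ref{prop-Kend}) is equivalent to $\pi$ killing all Chow groups of the associated Chow motive. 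Once that dictionary is set up, the rest is a direct citation of \cite{GG}.
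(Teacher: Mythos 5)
Your plan — reduce to \cite[Lemma~1]{GG} by translating the $K$\!-motive into a rational Chow motive via the Chern character — is genuinely different from what the paper does, and it has a real gap in step~(1) that you yourself flag as ``the main obstacle'' but do not close. The paper instead re-runs the GG argument directly in the $K$\!-motivic setting, proving two lemmas: Lemma~\ref{lemma-add1} shows that $K_0(N_F)_{\QQ}=0$ for every extension field $F$ of $\CC$ (using a finitely generated field of definition and the fact that $K_0$\!-groups inject both under finite extensions, via push-forward, and under extensions of algebraically closed fields, by restricting to closed points), and Lemma~\ref{lemma-add2} then bootstraps this to $K'_0(N\times V)_{\QQ}=0$ for every variety $V$ by induction on $\dim V$ using the localization sequence for $K'_0,$ after which $N=0$ follows by Yoneda. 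No dictionary with Chow motives is invoked at all.

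The gap in your step~(1) is more serious than ``one must check compatibility.'' First, the Chern character by itself does \emph{not} intertwine the composition of $K_0$\!-correspondences with the composition of Chow correspondences: push-forward does not commute with $\mathrm{ch}$ and you must insert Todd class (GRR) corrections, e.g.\ the modified correspondence $\pr_1^*\sqrt{\mathrm{td}_X}\cdot\mathrm{ch}(f)\cdot\pr_2^*\sqrt{\mathrm{td}_Y}$ of the type used in Section~\ref{sec:Kmotives} for Hochschild homology. Second, and more fundamentally, even after this correction the target category is not $\CH(\CC)_{\QQ}$ but the orbit category $\CH(\CC)_{\QQ}$ modulo $\Le\cong\uno,$ and the rational idempotent $\pi\in K_0(X\times X)_{\QQ}$ becomes a sum $\sum_p\pi'_p$ with $\pi'_p\in CH^p(X\times X)_{\QQ}$ that is idempotent only with respect to the \emph{collapsed} composition, so in general it need not lift to a genuine Chow idempotent. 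Consequently there is no ``associated rational Chow motive $M(X,\pi',\ast)_{\QQ}$'' to which you can literally apply \cite[Lemma~1]{GG}; you would need the analogue of that lemma for the orbit/ungraded category, and proving that analogue is essentially the same specialization and localization argument that the paper carries out directly in $\KM(\CC)_{\QQ}.$ So the route through Chow motives, once made rigorous, does not actually save work over the paper's direct proof.
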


The proof of the proposition consists of Lemma~\ref{lemma-add1} and Lemma~\ref{lemma-add2}. In their proofs, $X$ is a smooth projective variety over $\CC$ and $\pi\in K_0(X\times X)_{\QQ}$ is a projector such that $N=KM(X,\pi).$

\begin{lemma}\label{lemma-add1}
Let $N$ be as in Proposition~\ref{prop-add}. Then for any field $F$ over $\CC,$ we have \mbox{$K_0(N_F)_{\QQ}=0$}, where $N_F$ denotes the extension of scalars of $N$ from $\CC$ to $F.$
\end{lemma}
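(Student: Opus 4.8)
The plan is to reduce the statement for an arbitrary extension field $F/\CC$ to the case of a field that is finitely generated over $\CC,$ and then to a field that is finitely generated over $\CC$ of finite transcendence degree, where one can compare with $\CC$ itself. First I would recall that the $K\!$-motive $N=KM(X,\pi)$ is defined over $\CC,$ so $\pi\in K_0(X\times X)_{\QQ}$ is already defined over $\CC,$ and $N_F=KM(X_F,\pi_F)$ where $\pi_F$ is the pullback of $\pi$ along $X_F\times X_F\to X\times X.$ The group $K_0(N_F)_{\QQ}=\pi_F(K_0(X_F)_{\QQ})$ is a direct summand of $K_0(X_F)_{\QQ}.$

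Next I would use a standard limit argument. Any element of $K_0(X_F)_{\QQ}$ is defined over some subfield $F_0\subset F$ that is finitely generated over $\CC$; likewise any relation witnessing that $\pi_{F_0}$ kills a given class can be spread out over a finitely generated subfield. Hence it suffices to prove $K_0(N_{F_0})_{\QQ}=0$ for all finitely generated field extensions $F_0/\CC.$ Such an $F_0$ embeds into $\CC$ over $\CC$: indeed $F_0$ is the function field of some integral $\CC$-variety $V,$ and choosing a $\CC$-point in the image of $\Spec F_0\to V$ — or rather, embedding $F_0\hookrightarrow\CC$ by sending a transcendence basis to algebraically independent complex numbers and then extending — realizes $F_0$ as a subfield of $\CC.$

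Then I would exploit functoriality of $K_0$ under the flat base change along $\Spec\CC\to\Spec F_0$ and along $\Spec F_0\to\Spec\CC.$ The composition $\Spec\CC\to\Spec F_0\to\Spec\CC$ induces on $K_0(X\times X)_{\QQ}$ the identity on the subgroup of classes defined over $\CC$ — in particular it fixes $\pi$ — and similarly the map $K_0(X)_{\QQ}\to K_0(X_{F_0})_{\QQ}\to K_0(X_{\CC})_{\QQ}=K_0(X)_{\QQ}$ is the identity. Therefore the retract $K_0(X_{F_0})_{\QQ}\to K_0(X)_{\QQ}$ is compatible with $\pi_{F_0}$ and $\pi,$ so it restricts to a map $K_0(N_{F_0})_{\QQ}\to K_0(N)_{\QQ}=0.$ Combined with the splitting $K_0(X)_{\QQ}\to K_0(X_{F_0})_{\QQ}$ being injective (it is split by the above composition), one gets that $K_0(N_{F_0})_{\QQ}$ injects into $K_0(N)_{\QQ}=0,$ hence vanishes.

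The main obstacle is the bookkeeping in the limit argument: one must check that not only the generators of $K_0(N_F)_{\QQ}$ but also the projector action and the vanishing relations descend to a common finitely generated subfield, using that $K_0$ of a smooth projective variety commutes with filtered colimits of the base field (this follows since $X_F=\varinjlim X_{F_\alpha}$ over finitely generated $F_\alpha\subset F$, and $K$-theory commutes with such limits; cf.\ \cite{Qu}). Once that is in place, the embedding $F_0\hookrightarrow\CC$ over $\CC$ and the functoriality of $K_0$ under base change do the rest; the compatibility of $\pi$ with base change is immediate because $\pi$ is defined over $\CC.$
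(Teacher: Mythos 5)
Your proposal has a fatal gap at the central step. You claim that a finitely generated field extension $F_0/\CC$ embeds into $\CC$ \emph{over} $\CC$, ``by sending a transcendence basis to algebraically independent complex numbers and then extending.'' This is false whenever $F_0 \neq \CC$: if $\iota\colon F_0\hookrightarrow\CC$ is a $\CC$-algebra homomorphism and $t\in F_0$ is transcendental over $\CC$, then $\iota(t)\in\CC$ would have to be transcendental over $\CC$, which is impossible. Since $\CC$ is algebraically closed, the only finitely generated extension of $\CC$ that admits such an embedding is $\CC$ itself. The alternative phrasing you offer (``choosing a $\CC$-point in the image of $\Spec F_0\to V$'') is a specialization, not a field embedding, and it does not give the injective retraction on $K_0$-groups that your argument relies on; specialization maps need not be injective.

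The paper circumvents exactly this obstruction by first descending the data, not extending it. One chooses a finitely generated field $\kk$ over $\QQ$ together with an embedding $\kk\subset\CC$, a smooth projective variety $Y$ over $\kk$, and a projector $\rho$ with $(Y,\rho)\times_{\kk}\CC\cong(X,\pi)$; this is possible because $X$ and $\pi$ involve only finitely many data. Then, given a class $\alpha\in K_0(N_F)_{\QQ}$ with $F$ finitely generated over $\CC$, one further descends $\alpha$ to a class $\beta\in K_0(P_E)_{\QQ}$ for some finitely generated extension $E$ of $\kk$. The crucial point is that $E$ (finitely generated over $\kk$, hence of finite transcendence degree over $\QQ$) \emph{does} embed into $\CC$ over $\kk$, since $\CC$ has infinite transcendence degree over $\kk$. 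One then uses that $K_0$-groups with $\QQ$-coefficients inject under finite extensions (via push-forward) and under extensions of algebraically closed fields to get $K_0(P_E)_{\QQ}\hookrightarrow K_0(N)_{\QQ}=0$, hence $\beta=0$ and $\alpha=0$. Your limit argument in step one and the general idea of using base-change functoriality and injectivity are sound; the missing idea is the descent to a small field of definition $\kk$, which is what makes the embedding into $\CC$ possible at all.
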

\begin{proof}
Let $\kk$ be a finitely generated field over $\QQ$ such that $X$ and $\pi$ are defined over $\kk,$ i.e., there is an embedding of fields $\kk\subset \CC,$ a smooth projective variety $Y$ over $\kk,$ a projector $\rho\in K_0(Y\times Y)_{\QQ},$ and an isomorphism of varieties over $\CC$
$$
Y\times_{\kk}\CC\cong X
$$
such that $\rho_{\CC}=\pi.$ Let $P:=KM(Y,\rho)$ be the corresponding rational $K\!$-motive over $\kk.$ By construction, we have $P_{\CC}\cong N.$

It is enough to treat the case when $F$ is finitely generated over $\CC.$ Consider an arbitrary element $\alpha\in K_0(N_F)_{\QQ}.$ Let $E$ be a finitely generated field over $\kk$ such that $\alpha$ is defined over~$E,$ i.e., there is an embedding of fields $E\subset F$ over $\kk$ and an element $\beta\in K_0(P_E)_{\QQ}$ such that $\beta_F=\alpha.$

Further, consider any embedding of $E$ in $\CC$ over $\kk.$ We have an embedding of $K_0$-groups with rational coefficients
$$
K_0(P_E)_{\QQ}=\rho_E(K_0(Y_E)_{\QQ})\subset K_0(N)_{\QQ}=\pi(K_0(X)_{\QQ})\,.
$$
Indeed, $K_0$-groups of varieties are embedded for finite extensions by the existence of the push-forward map and are embedded for extensions of algebraically closed fields by taking points on varieties (cf.~\cite[p.1.21]{Bloch}).

By assumption of the lemma, we have $K_0(N)_{\QQ}=0,$ whence $K_0(P_E)_{\QQ}=0,$ $\alpha'=0,$ and $\alpha=0,$ which finishes the proof.
\end{proof}

Let $V$ be a variety over $\CC$ (not necessarily smooth or projective). By $K_0'(V)$ denote the Grothendieck group of coherent sheaves on $V$ (analogously with rational coefficients). Let us define the group $K_0'(N\times V)_{\QQ}.$ For this purpose define the action of $\pi$ on $K'_0(X\times V)_{\QQ}$ by the standard formula
$$
\pi(\alpha):=\pr_{2*}(\pr_1^*(\alpha)\cdot \pi)\,,
$$
where $\pr_i\colon X\times X\times V\to X\times V$ are natural projections. In order to justify this formula note that the morphisms $\pr_i$ are flat and projective and there is a product between $K_0'$-groups and $K_0$-groups (see~\cite{Qu} for more details on functoriality of $K'_0$-groups). We put
$$
K_0'(N\times V)_{\QQ}:=\pi(K'_0(X\times V)_{\QQ})\,.
$$
Note that this agrees with the case when $V$ is smooth and projective.

\begin{lemma}\label{lemma-add2}
Let $N$ be a rational $K\!$-motive over $\CC$ such that for any field $F$ over $\CC,$ we have $K_0(N_F)_{\QQ}=0.$  Then the following is true:
\begin{itemize}
\item[(i)]
for any variety $V$ over $\CC,$ we have
$
K_0'(N\times V)_{\QQ}=0\,;
$
\item[(ii)]
we have $N=0.$
\end{itemize}
\end{lemma}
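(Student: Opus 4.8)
The plan is to prove (i) by a Noetherian induction on the dimension of $V$, and then deduce (ii) as the special case where $V = \Spec(\CC)$ is a point. For part~(i), I would first reduce to the case where $V$ is integral: the group $K'_0(X\times V)_{\QQ}$ is generated by classes of coherent sheaves, and by the usual dévissage each such class can be written, modulo classes supported on a proper closed subset, as a combination of classes of sheaves that are generically (on $V$) locally free on the irreducible components of $V_{\mathrm{red}}$. Passing to the components and using that nilpotents do not affect $K'_0$, one reduces to $V$ integral with function field $F = \CC(V)$. The key input is then the localization sequence for $K'_0$-theory: for a nonempty open $U\subset V$ with closed complement $Z$, one has an exact sequence $K'_0(X\times Z)_\QQ \to K'_0(X\times V)_\QQ \to K'_0(X\times U)_\QQ \to 0$, and this sequence is compatible with the action of $\pi$ (since $\pi$ acts through flat, projective projections that commute with flat base change and with proper pushforward along closed immersions). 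Applying $\pi$ to the whole sequence preserves exactness because $\pi$ is a projector and $\QQ$-vector spaces are flat, so we get $K'_0(N\times Z)_\QQ \to K'_0(N\times V)_\QQ \to K'_0(N\times U)_\QQ \to 0$.

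Now $\dim Z < \dim V$, so by the inductive hypothesis $K'_0(N\times Z)_\QQ = 0$ for all proper closed $Z$, which forces $K'_0(N\times V)_\QQ \xrightarrow{\ \sim\ } K'_0(N\times U)_\QQ$ for every nonempty open $U$. Taking the colimit over all such $U$ gives an isomorphism $K'_0(N\times V)_\QQ \cong \varinjlim_U K'_0(N\times U)_\QQ$. The crucial point is to identify this colimit with $K_0(N_F)_\QQ$ where $F = \CC(V)$: since $X$ is smooth, $X\times U$ is regular, so $K'_0(X\times U) = K_0(X\times U)$, and the generic-fiber restriction maps assemble to $\varinjlim_U K_0(X\times U)_\QQ \cong K_0(X_F)_\QQ$ by continuity of $K$-theory (every coherent sheaf on $X_F$ spreads out to some $X\times U$, and two such spread-outs agree after further shrinking). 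This colimit is compatible with the projector $\pi$, hence $K'_0(N\times V)_\QQ \cong K_0(N_F)_\QQ = 0$ by hypothesis. This proves~(i), and part~(ii) is the case $V = \Spec \CC$, where $K'_0(N\times V)_\QQ = K_0(N)_\QQ$.

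The main obstacle I anticipate is the careful bookkeeping in the dévissage step and in checking that the localization sequence is genuinely equivariant for the $\pi$-action — one has to verify that $\pr_{1}^*$, multiplication by $\pi \in K_0(X\times X)_\QQ$, and $\pr_{2*}$ all commute with the restriction-to-open and pushforward-from-closed maps appearing in the localization sequence. This uses flatness of the projections $X\times X\times V \to X\times V$ (so that $\pr_1^*$ is exact and commutes with flat base change in $V$) together with the projection formula and base change for proper pushforward along the closed immersion $X\times Z \hookrightarrow X\times V$, all of which are standard for $K'_0$ as recalled in \cite{Qu}; the subtlety is purely in assembling these compatibilities correctly rather than in any deep new idea.
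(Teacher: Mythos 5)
Your argument for part (i) is essentially the same as the paper's: induction on $\dim V$ via localization (the paper writes a single exact sequence
\[
\bigoplus_W K'_0(X\times W)\to K'_0(X\times V)\to \bigoplus_i K'_0(X_{F_i})\to 0
\]
with $W$ ranging over smaller-dimensional subvarieties and $F_i$ over function fields of the maximal-dimensional components — this is precisely what your colimit-of-opens produces), and the compatibility of $\pi$ with these maps is the same flat-pullback/proper-pushforward bookkeeping you identify. That part is fine.

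Part (ii), however, has a genuine gap. Taking $V=\Spec\CC$ in (i) gives only $K_0(N)_{\QQ}=0$, which is already contained in the hypothesis (apply it with $F=\CC$); it does not say that the object $N$ is zero in the category $\KM(\CC)_{\QQ}$. The entire point of this lemma (and of the surrounding Proposition~\ref{prop-add}) is to pass from the vanishing of a group $K_0(N)_{\QQ}$ to the vanishing of the $K$\!-motive $N$ itself, and that passage is not automatic. What is actually needed is the Yoneda lemma in $\KM(\CC)_{\QQ}$: for $V$ smooth and projective one has $K'_0(N\times V)_{\QQ}=K_0(N\times V)_{\QQ}\cong\Hom_{\KM(\CC)_{\QQ}}(KM(V),N)$, so (i) says that $N$ receives no nonzero morphisms from any $KM(V)$; since every object of $\KM(\CC)_{\QQ}$ is a direct summand of some $KM(V)$, Yoneda forces $N=0$. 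Without this step your proof of (ii) is circular.
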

\begin{proof}
First let us prove (i) by induction on dimension of $V.$ We have an exact sequence
$$
\bigoplus\limits_W K'_0(X\times W)\lto K_0'(X\times V)\lto \bigoplus\limits_i{K_0'(X_{F_i})}\lto 0\,,
$$
where $W$ runs through all subvarieties in $V$ of smaller dimension and $F_i$ runs through fields of rational functions on irreducible components in $V$ of maximal dimension. Taking this exact sequence with rational coefficients and applying $\pi,$ we obtain an exact sequence
$$
\bigoplus\limits_W K'_0(W\times N)_{\QQ}\lto K_0'(V\times N)_{\QQ}\lto \bigoplus\limits_i{K_0'(N_{F_i})_{\QQ}}\lto 0\,.
$$
The condition of the lemma immediately implies the required statement.
Finally, (ii) follows from (i) by Yoneda lemma when we assume $V$ to be smooth and projective.
\end{proof}

\begin{remark}{\rm
It follows from the proof of Proposition~\ref{prop-add} that we may replace in its formulation the field of definition $\CC$ by any algebraically closed field of infinite transcendence degree over its prime subfield.
}
\end{remark}

Since higher $K\!$-groups and Hochschild homology are well-defined for $K\!$-motives, we obtain the following result by Proposition~\ref{prop-add}.

\begin{theorem}\label{thm-trivial}
Let $X$ be a smooth projective variety over $\CC,$ $\N\subset \mathbf{D}^b(\coh X)$ be an admissible subcategory.  Suppose that $K_0(\N)_{\QQ}=0.$ Then $K_i(\N)_{\QQ}=0$ and $\mathrm{HH}_i(\N)=0$ for all $i\geqslant 0.$
\end{theorem}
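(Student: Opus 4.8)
The plan is to pass from $\N$ to its associated $K\!$-motive $KM(\N)$ and apply Proposition~\ref{prop-add} after tensoring with $\QQ$. First I would recall, as established in the discussion preceding Proposition~\ref{prop:KM-univ}, that $KM(\N)$ is a well-defined $K\!$-motive, that it is a direct summand of $KM(X)$, and that both the higher $K\!$-groups and the Hochschild homology of the $K\!$-motive agree with those of the category: $K_i(KM(\N))\cong K_i(\N)$ and $\mathrm{HH}_i(KM(\N))\cong\mathrm{HH}_i(\N)$ for all $i\geqslant 0$. Passing to rational coefficients, set $P:=KM(\N)_{\QQ}\in\KM(\CC)_{\QQ}$. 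Then $K_0(P)=K_0(\N)_{\QQ}$, which vanishes by hypothesis.

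Next I would invoke Proposition~\ref{prop-add}: a rational $K\!$-motive over $\CC$ with vanishing $K_0$ is itself zero. Hence $P=KM(\N)_{\QQ}=0$ in $\KM(\CC)_{\QQ}$. Now all additive invariants computed from $P$ vanish: in particular $K_i(\N)_{\QQ}=K_i(KM(\N))_{\QQ}=K_i(P)=0$ for every $i\geqslant 0$, since taking $K_i$ of a $K\!$-motive is given by applying the corresponding projector, and the projector defining $P$ is zero (or more conceptually, $K_i(-)_{\QQ}$ is a functor on $\KM(\CC)_{\QQ}$ killing the zero object).

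For the Hochschild homology statement I would argue similarly, using that $\mathrm{HH}_i$ is well-defined on $K\!$-motives (as recalled before Proposition~\ref{prop:KM-univ}) and functorial, so $\mathrm{HH}_i(\N)\cong\mathrm{HH}_i(KM(\N))=\mathrm{HH}_i(P)$; since $P=0$, this group vanishes. One subtlety to address: Proposition~\ref{prop-add} is stated for rational $K\!$-motives, and $\mathrm{HH}_i(\N)$ a priori carries integral structure, so I would be careful to note that the vanishing of the \emph{rational} $K\!$-motive already forces $\mathrm{HH}_i(\N)=0$ on the nose, because over $\CC$ the Hochschild homology of a smooth proper DG category is a finite-dimensional $\CC$-vector space and the map $f\mapsto f_*$ on $\mathrm{HH}_*$ depends only on the class in $K_0(X\times X)_{\QQ}$ (indeed it is given by the Chern-character formula with Todd corrections displayed in Section~\ref{sec:Kmotives}); thus the idempotent cutting out $\mathrm{HH}_i(\N)$ is determined by $[\E]_{\QQ}$, which is zero.

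The main obstacle is the last point: making rigorous that triviality of $KM(\N)_{\QQ}$ (an object in $\KM(\CC)_{\QQ}$) implies the genuine, integral vanishing $\mathrm{HH}_i(\N)=0$ rather than merely $\mathrm{HH}_i(\N)_{\QQ}=0$. This is where one must use that over a field of characteristic zero $\mathrm{HH}_*$ is already a $\CC$-vector space, so there is no distinction between the integral and rational statements; the action of a $K_0$-correspondence on $\mathrm{HH}_*$ factors through $K_0(X\times X)_{\QQ}$, and the projector $[\E]$ becomes zero there. The parallel point for $K$-theory is milder, since we only claim $K_i(\N)_{\QQ}=0$, which follows formally once $KM(\N)_{\QQ}=0$. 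Everything else is a direct application of Proposition~\ref{prop-add} together with the dictionary between $\N$ and $KM(\N)$ set up earlier.
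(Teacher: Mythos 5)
Your proposal is correct and follows exactly the paper's approach: pass to the $K$-motive $KM(\N)$, apply Proposition~\ref{prop-add} to conclude $KM(\N)_{\QQ}=0$ (equivalently that the projector $[\E]$ vanishes in $K_0(X\times X)_{\QQ}$), and then use that higher $K$-groups and Hochschild homology are well-defined and functorial on $\KM(\CC)_{\QQ}$. The paper states this in a single sentence; your extra care about the integral-versus-rational issue for $\mathrm{HH}_*$ (resolved because $\mathrm{HH}_*$ is already a $\CC$-vector space and the correspondence action factors through $K_0(X\times X)_{\QQ}$) is a correct elaboration of the same argument, not a different route.
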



\begin{thebibliography}{BGKS}
\bibitem[AO]{AO} V.~Alexeev, D.~Orlov, {\em Derived categories of Burniat surfaces and exceptional collections},
Preprint (2012), arXiv:1208.4348.

\bibitem[SGA6]{BGI} P.~Berthelot, A.~Grothendieck, L.~Illusie, {\em Th\'eorie des intersections et th\'eor\`eme de Riemann--Roch}, S\'eminaire de G\'eom\'etrie Alg\'ebrique du Bois-Marie 1966-1967 (SGA 6), Lecture Notes in Mathematics, {\bf 225} (1971).

\bibitem[Bl]{Bloch}
S.~Bloch, {\em Lectures on algebraic cycles}, Duke Univ. Math. Series IV (1980).

\bibitem[BGS]{BGS}
Ch.~B\"ohning, H-Ch.~Graf~von Bothmer, P.~Sosna, {\em On
  the derived category of the classical {G}odeaux surface}, Preprint (2012),
  arXiv:1206.1830.

\bibitem[BGKS]{BBKS} Ch.~B\"ohning, H-Ch.~Graf~von Bothmer, L.~Katzarkov, P.~Sosna {\em
Determinantal Barlow surfaces and phantom categories}, Preprint (2012),  arXiv:1210.0343.

\bibitem[BK1]{BK}
A.~Bondal, M.~Kapranov, {\em Representable functors, {S}erre
  functors, and reconstructions}, Izv. Akad. Nauk SSSR Ser. Mat. \textbf{53}
  (1989), no.~6, 1183--1205, 1337.

\bibitem[BK2]{BK2}
A.~Bondal, M.~Kapranov, {\em Enhanced triangulated categories}, Mat. Sb. \textbf{181} (1990),
  no.~5, 669--683.

\bibitem[BO]{BO} A.~Bondal, D.~Orlov,
{\em Semiorthogonal decomposition for algebraic varieties},
{Preprint MPIM} 95/15 (1995), arXiv:math.AG/9506012.

\bibitem[Fu]{Ful} W.~Fulton, {\em Intersection theory}, Ergebnisse der Mathematik und ihrer Grenzgebiete (3) [Results in Mathematics and Related Areas (3)], {\bf 2}, Springer-Verlag, Berlin (1984).

\bibitem[GS]{GS} S.~Galkin, E.~Shinder, {\em Exceptional collection on the Beauville surface}, Preprint (2012), arXiv:1210.3339.

\bibitem[GG]{GG} S.~Gorchinskiy, V.~Guletskii, {\em Motives and representability of algebraic cycles on threefolds over a field}, Journal of Algebraic Geometry, {\bf 21} (2012), 347--373.

\bibitem[IM]{IM}
H.~Inose and M.~Mizukami, {\em Rational equivalence of {$0$}-cycles on some
  surfaces of general type with {$p_{g}=0$}}, Math. Ann. \textbf{244} (1979),
  no.~3, 205--217.

\bibitem[KMP]{KMP}
B.~Kahn, J.P.~Murre, C.~Pedrini, {\em On the transcendental part of the motive of a surface}, London Math. Soc. Lecture Notes Ser., {\bf 344}, Cambridge Univ. Press, Cambridge (2004), 143--202.

 \bibitem[Ke1]{Ke1}
B.~Keller, {\em Invariance and Localization for Cyclic Homology of DG algebras}, Journal of Pure and Applied Algebra, {\bf 123}, (1998), 223-273.

\bibitem[Ke2]{Ke2}
B.~Keller, {\em On differential graded categories}, International {C}ongress of
  {M}athematicians. {V}ol. {II}, Eur. Math. Soc., Z\"urich, (2006), 151--190.

\bibitem[KS]{KS} M.~Kontsevich, Y.~Soibelman,
{\em Notes on $A_{\infty}$\!-algebras, $A_{\infty}$\!-categories and non-commutative geometry}, Homological mirror symmetry, 153--219,
Lecture Notes in Phys., 757, Springer, Berlin, 2009.

\bibitem[Ku1]{Ku1} A.~Kuznetsov,
{\em Hochschild homology and semiorthogonal decompositions}, Preprint (2009), arXiv:0904.4330.

\bibitem[Ku2]{Ku2} A.~Kuznetsov,
{\em Height of exceptional collections and Hochschild cohomology of quasiphantom categories}, Preprint (2012), arXiv:1211.4693.

\bibitem[LO]{LO} V.~Lunts, D.~Orlov,
{\em Uniqueness of enhancement for triangulated categories,}
J. Amer. Math. Soc. {\bf 23} (2010), 3, 853--908.

\bibitem[LS]{LS} V.A.~Lunts, O.M.~Schn\"urer, {\em Smoothness of equivariant derived categories},
Preprint (2012), arXiv:1205.3132.

\bibitem[Ma]{Manin} Yu.~I.~Manin, {\em Correspondences,
motifs and monoidal transformations}, Math. USSR-Sb. {\bf 6} (1968), 439--470.

\bibitem[MT]{MT} M.~Marcolli, G.~Tabuada, {\em From exceptional collections to motivic decompositions via noncommutative motives}, preprint (2012), arXiv:1202.6297.

\bibitem[MS]{MS} A.~S.~Merkurjev, A.~A.~Suslin, {\em $K\!$-cohomology of Severi-Brauer varieties and the norm residue homomorphism}, Math. USSR-Izv. {\bf 21}:2 (1983), 307--340.

\bibitem[Or]{Or} D.~Orlov {\em Derived categories of coherent sheaves and equivalences
between them},  Russian Math. Surveys, {\bf 58} (2003), 3, 511--591.

\bibitem[Qu]{Qu} D.~Quillen, {\em Algebraic $K$\!-theory I}, Lecture Notes in Mathematics
{\bf 341} (1973), 85--147.

\bibitem[Sch]{Scholl} A.~Scholl, {\em Classical motives}, Proc. Sympos. Pure Math.
{\bf 55}:1, Amer. Math. Soc., Providence, RI (1994), 163--187.

\bibitem[Ta]{Ta} G.~Tabuada, {\em Invariants additifs de dg-cat\'egories}, Int. Math. Res. Notices {\bf 53} (2005), 3309--3339.

\bibitem[To]{To} B.~To\"{e}n, {\em The homotopy theory of dg-categories and derived Morita theory,}
Invent. Math., {\bf 167} (2007), 3, 615--667.

\bibitem[TV]{ToVa} B.~To\"{e}n, M.~Vaqui\'e,
{\em Moduli of objects in dg-categories},
Ann. Sci. \'Ecole Norm. Sup. (4) {\bf 40} (2007), 3, 387--444.


\end{thebibliography}
\end{document}